\newtheorem{thm}{Theorem}[section]
\newtheorem{lem}{Lemma}[section]
\newtheorem{cor}{Corollary}[section]
\newcolumntype{M}[1]{>{\centering\arraybackslash}m{#1}} 
\def\1{\mathds{1}}
\DeclareMathOperator{\Log}{log}
\newenvironment{proofbold}[1][\proofname]{{\noindent \bfseries #1.}}{\hfill$\square$\newline}
\title[... to explicit estimates for the
  M\"obius function -- II]{From explicit estimates for the primes to explicit estimates for the
  M\"obius function -- II%
}
\author{Olivier Ramar\'e}
\author{Sebastian Zuniga-Alterman}
\thanks{$^*$ The second author has been supported by the Finnish Centre of Excellence in Randomness
and Structures grant 346307 of the Academy of Finland,}
\newcommand{\Addresses}{{
  {\footnotesize
\ \\
  O.~RAMAR\'E - \textsc{CNRS / Institut de Math\'ematiques de Marseille, Aix Marseille Universit\'e,
U.M.R. 7373, Campus de Luminy, Case 907, 13288 MARSEILLE Cedex 9, France.}\\
  \texttt{olivier.ramare@univ-amu.fr}

\ \\
  S.~ZUNIGA ALTERMAN - \textsc{Department of Mathematics and Statistics, University of Turku, 20014 TURKU, Finland.}\\
  \texttt{szualt@utu.fi}

}}}
\begin{document}

\subjclass[2010]{Primary: 11N37, 11Y35, Secondary: 11A25}

 \keywords{Explicit estimates, M\"obius function}

\maketitle

\begin{abstract}
  We improve on all the results of \cite{Ramare*12-2} by incorporating the
  finite range computations performed since then by several
  authors. Thus we have
  \begin{align*}
    \Bigg|\sum_{n\le X}\mu(n)\Bigg|
    &\le \frac{0.006688\,X}{\Log X},&&\text{for } X\ge 1\,798\,118,\\
    \Bigg|\sum_{n\le X}\frac{\mu(n)}{n}\Bigg|
   & \le \frac{0.010032}{\Log X},&& \text{for } X\ge 617\,990.
  \end{align*}  
  We also improve on the method described in \cite{Ramare*12-2} by a simple remark. 
\end{abstract}


\section{Introduction and results}
Let $\mu$ the M\"obius function and $\Lambda$ the von Mangoldt function. In \cite{Ramare*12-2}, the first author exploited the identity
\begin{equation}
  \label{enlighteningbis}
  2\gamma+\sum_{n\le X}\mu(n)\Log^2n
  =\sum_{k\ell \le X}\mu(\ell)\bigl(\Lambda\star\Lambda(k)-\Lambda(k)\Log k+2\gamma\bigr),
\end{equation}
valid for any $X\geq 1$, to derive explicit estimates for the summatory function $M$ of the
M\"obius function $\mu$. In this article, we propose to update those estimates by
taking into account \cite{Hurst*18} by G.~Hurst,
\cite{Buthe*18} by J.~B\"uthe,
\cite{Vanlalnagaia*15-1} by R.~Vanlalnagaia and \cite{Ramare*12-1} by the first author.
We also improve on their corresponding proofs by essentially taking a closer look
at them, see for instance Lemma~\ref{majR2}.  Moreover, we hope to have improved
on the exposition with respect to \cite{Ramare*12-2}. In order to do so, we shall reproduce some lemmas whose proofs can still be found in
\cite{Ramare*12-2}, so that the reader may follow the
argument more easily. By the same reason, more on the history of this problem and on the philosophy of the
method we use can be found in the original work~\cite{Ramare*12-2}.

Below we state our results, which should be directly compared to those presented in \cite{Ramare*12-2} in their corresponding ranges.
\begin{thm}
  \label{mainbound}
  For $X\ge 1\,798\,118$, we have
  \begin{equation*}
    \Bigg|\sum_{n\le X}\mu(n)\Bigg|
    \le \frac{(0.006688\,\Log X-0.039)X}{\Log^{2}X}.
  \end{equation*}
\end{thm}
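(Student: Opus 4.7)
My plan is to use the identity \eqref{enlighteningbis} as a machine that outputs a bound on the weighted Mertens sum $S(X) := \sum_{n \le X}\mu(n)\Log^2 n$, and then to invert by Abel summation, using $M(X)\Log^2 X = S(X) + \int_1^X (2\Log t/t)\,M(t)\,dt$, to obtain a self-referential inequality for $|M(X)|$. To solve it, I will assume a bound of the shape $|M(t)| \le c\, t/\Log t$ on a starting range $t \le X_0$, verified by the computations of Hurst~\cite{Hurst*18}, and then bootstrap it to all $X \ge X_0$.

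For the right-hand side of \eqref{enlighteningbis}, set $T(k) = \Lambda\star\Lambda(k) - \Lambda(k)\Log k + 2\gamma$ and $U(Y) = \sum_{k \le Y} T(k)$. Selberg's symmetry relation together with explicit Chebyshev-type estimates on $\psi$ shows that $U(Y)$ is of order $Y/\Log Y$ (with the main $Y\Log Y$ and $Y$ terms cancelling, which is exactly the point of adding $2\gamma$), with fully explicit constants sourced from \cite{Buthe*18,Vanlalnagaia*15-1,Ramare*12-1}. Swapping the order of summation, the RHS equals $\sum_{\ell \le X}\mu(\ell)\,U(X/\ell)$, which I would decompose at some parameter $Y_0$ into a small-$\ell$ range (where $X/\ell \ge Y_0$ and $U(X/\ell)$ is controlled by the cited estimates) and a complementary large-$\ell$ range (where $U$ is bounded trivially and $\sum_\ell \mu(\ell)\cdot(\dots)$ is handled via a secondary Abel summation involving $|M(X/\ell)|$ with the inductive hypothesis). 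The promised ``simple remark'' (the forthcoming Lemma~\ref{majR2}) should arise from retaining the sign of $\mu(\ell)$ in an intermediate subsum rather than bounding $|\mu(\ell)|$ by $1$, replacing a $\sum_\ell 1$ estimate by an $M$-type estimate and trimming the leading constant.

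Adding the contributions from both ranges will produce an inequality of the form $|S(X)| \le \alpha X + \beta \int_{X_0}^X |M(t)|/t\,dt$ with explicit $\alpha,\beta$, which when combined with the Abel inversion gives $|M(X)|\Log^2 X \le \alpha X + (\beta+2)\int_{X_0}^X (|M(t)|\Log t/t)\,dt + \text{starting terms}$. A Gronwall-type manipulation then converts this into the desired coefficient $0.006688\,\Log X - 0.039$ in front of $X/\Log^2 X$.

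The main obstacle will be the numerical bookkeeping rather than any conceptual difficulty: several additive contributions --- the Selberg error, the tail beyond the verified range, and the integral coming from Abel summation --- must be balanced by optimizing the split parameter $Y_0$ between the two $\ell$-ranges, so that the advertised constant $0.006688$ and the starting threshold $X_0 = 1\,798\,118$ are simultaneously attained. In practice this means propagating the finite computations of \cite{Hurst*18,Buthe*18} carefully and verifying by direct numerical check that the output inequality is not violated in the transition window where the explicit verifications end and the asymptotic bounds take over.
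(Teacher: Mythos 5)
Your plan diverges from the paper in a way that creates a real quantitative gap, not just a different route. First, the architecture: the paper never bootstraps. No inductive hypothesis on $M$ enters the argument. In the rearranged identity \eqref{light*} the terms $M(X/k)$ for $k\le K$ are bounded by the \emph{known} absolute estimate $|M(t)|\le t/4345$ of Cohen--Dress--El\,Marraki \eqref{eq:17} (Lemma~\ref{aux1}), and the inner sums $\mu(\ell)R_2^*(X/\ell)$ are bounded by $\mu^2(\ell)$ times explicit bounds on $R_2^*$; the passage from $\sum_{n\le X}\mu(n)\Log^2 n$ to $M(X)$ is then a single partial summation (Lemma~\ref{summ}) fed by the already-proved Lemma~\ref{victoire}, plus finite verification below $1.5\cdot 10^7$. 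Your Gronwall scheme is not only unnecessary, it cannot produce the stated constant: you posit $|S(X)|\le \alpha X+\beta\int|M(t)|\,dt/t$ with constant $\alpha$, resting on the claim that $U(Y)=R_2^*(Y)$ is explicitly $O(Y/\Log Y)$ ``with fully explicit constants sourced from the cited papers.'' That input does not exist at the required quality and range: with the available explicit prime estimates the paper can only prove $|R_2^*(Y)|\le 0.011\,Y$ for $Y\ge 1.8\cdot 10^9$ (Lemma~\ref{BoundR2}), essentially because terms like $|R(Y)|\Log Y$ with $|R(Y)|\le 0.0065\,Y/\Log Y$ are already of order $Y$. Consequently the bound on $\sum_{n\le X}\mu(n)\Log^2 n$ is of size $(0.006688\,\Log X-0.0504)X$, i.e.\ order $X\Log X$, the constant $0.006688$ arising directly as (roughly) $0.011\cdot 6/\pi^2$ from $\sum_{\ell}\mu^2(\ell)/\ell\approx \tfrac{6}{\pi^2}\Log X$ --- not from any iteration. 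A self-referential inequality with constant $\alpha$ and a Gronwall step has no mechanism to generate this coefficient.

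Two further points. In your ``large-$\ell$'' range the arguments $X/\ell$ descend below any threshold where an inductive hypothesis of the shape $c\,t/\Log t$ is available (indeed below the theorem's own threshold), so you would be forced back onto the absolute bounds $|M(t)|\le\sqrt{t}$ (Hurst) and $|M(t)|\le t/4345$ anyway; and if you do keep a bootstrap you must also make the induction well-founded (minimal counterexample or induction on dyadic ranges), which your sketch does not address. Finally, your guess about the paper's ``simple remark'' is off: Lemma~\ref{majR2} does not retain the sign of $\mu(\ell)$; its novelty is to keep the combination $\sqrt{X}\,r(\sqrt{X})-R(\sqrt{X})$ together inside the bound for $R_2^*$, so that Vanlalnagaia's estimate \eqref{boundr} applies to that difference instead of estimating $r(\sqrt{X})$ and $R(\sqrt{X})$ separately.
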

This improves  by almost a factor of $2$ on \cite[Thm. 1.1]{Ramare*12-2}, which we  recover as
the next corollary by adding some simple computations to lower the range of~$X$.
\begin{cor}
  \label{mainbound2}
  For $X\ge 1\,078\,853$, we have
  \begin{equation*}
    \Bigg|\sum_{n\le X}\mu(n)\Bigg|
    \le \frac{(0.0130\Log X-0.118)X}{\Log ^2X}.
  \end{equation*}
\end{cor}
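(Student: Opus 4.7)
The plan is to combine Theorem~\ref{mainbound} on its range of validity with a direct finite verification on the interval that the theorem does not cover, namely $1\,078\,853\le X<1\,798\,118$.

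First, on $X\ge 1\,798\,118$ the corollary is an immediate consequence of Theorem~\ref{mainbound}. Indeed, comparing the two affine functions of $\Log X$ appearing in the numerators, the inequality
$$0.006688\,\Log X-0.039\ \le\ 0.0130\,\Log X-0.118$$
is equivalent to $\Log X\ge 0.079/0.006312\simeq 12.52$, i.e.\ $X\gtrsim 2.7\cdot 10^{5}$, which is amply satisfied throughout Theorem~\ref{mainbound}'s range. This disposes of the large range.

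Second, I would verify the inequality on the finite window $[1\,078\,853,\,1\,798\,118)$ by a direct computation. Concretely, I would compute the Mertens function $M(n):=\sum_{k\le n}\mu(k)$ at each integer of the window either via a sieve of Eratosthenes up to $1\,798\,118$ or by reading off the tables of~\cite{Hurst*18}, which already go far beyond this range. A quick derivative computation shows that
$$f(X):=\frac{(0.0130\,\Log X-0.118)X}{\Log^{2}X}$$
is strictly increasing on the window: writing $u=\Log X$, the sign of $f'(X)$ is that of $au^{2}-(a+b)u+2b$ with $a=0.0130$, $b=0.118$, whose larger root is $u\approx 7.73$, well below $\Log(1\,078\,853)\approx 13.89$. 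Since $M$ is constant between consecutive integers and $f$ is increasing, it suffices to verify $|M(n)|\le f(n)$ at each integer $n$ in the window, or even only at those integers where $|M|$ attains a new record value.

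The main obstacle is entirely computational rather than analytic: no new theoretical input beyond Theorem~\ref{mainbound} is required. Numerically $|M(n)|$ stays in the low hundreds throughout this window while $f(n)$ is of the same order of magnitude---the bound is, after all, designed to be reasonably tight---so the check has moderate but not overwhelming slack and the sieve must be carried out carefully enough to catch every local record of $|M|$.
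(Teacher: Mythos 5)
Your proposal is correct and follows exactly the route the paper intends: for $X\ge 1\,798\,118$ the bound follows from Theorem~\ref{mainbound} since $0.006688\,\Log X-0.039\le 0.0130\,\Log X-0.118$ once $\Log X\ge 0.079/0.006312$, and the remaining window down to $1\,078\,853$ is handled by the "simple computations" (a direct numerical check of $|M(n)|$ against the increasing majorant) that the paper alludes to. Your monotonicity argument justifying that checking integers suffices is a sound way to organize that verification.
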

On the other hand, by the method outlined in $\S$\ref{strategy}, we also derive a bound on the logarithmic average of the M\"obius function.
\begin{cor}
  \label{mainboundbis}
  For $X\ge 617\,990$, we have
  \begin{equation*}
    \Bigg|\sum_{n\le X}\frac{\mu(n)}{n}\Bigg|
    \le      \frac{0.010032\log X-0.0568}{\log^2 X}.
  \end{equation*}
\end{cor}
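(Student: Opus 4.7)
The plan is to deduce the bound on $m(X) := \sum_{n \leq X} \mu(n)/n$ from the bound on $M(X) := \sum_{n \leq X} \mu(n)$ provided by Theorem \ref{mainbound}, combined with a direct numerical verification in the range below the theorem's threshold. A first attempt via Abel summation gives
$$
m(X) = \frac{M(X)}{X} + \int_1^X \frac{M(t)}{t^2}\,dt,
$$
but this approach stalls on the integral: the pointwise bound $|M(t)|\leq ct/\log t$ supplied by Theorem \ref{mainbound} yields an integrand of size $1/(t\log t)$, whose integral diverges over long ranges, so an absolute-value argument alone cannot reach a bound of order $1/\log X$. Using the prime number theorem in the form $\sum_{n\geq 1}\mu(n)/n = 0$, one can equivalently write $m(X) = M(X)/X - \int_X^\infty M(t)/t^2\,dt$, but the divergence issue persists.

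To circumvent this, I would follow the method outlined in $\S$\ref{strategy}. By analogy with the identity \eqref{enlighteningbis} used to control $M$, one works with a logarithmically weighted analog --- the natural candidate being $\sum_{n \leq X}\mu(n)\log^2 n/n$ --- and obtains an identity expressing this sum in terms of convolution sums over the von Mangoldt function and its self-convolution, divided by the appropriate factors of the summation variables. These auxiliary sums admit explicit Mertens-type control from the finite-range computations cited in the introduction. One then recovers the bound on $m(X)$ by Abel-style rearrangement, avoiding the divergent absolute tail integral and directly producing a decay of order $1/\log X$.

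The main obstacle is the bookkeeping required to produce the specific constants $0.010032$ and $0.0568$. The ratio $0.010032/0.006688 = 3/2$ between the leading coefficient here and the one in Theorem \ref{mainbound} is suggestive of an averaging factor picked up in the passage from the summatory to the logarithmic sum, of the kind that naturally arises when converting estimates via partial summation or via integration against $dt/t$ inside the Selberg-type identity. The lower-order constant $0.0568$ would then emerge by carefully combining all Mertens-type error terms with the boundary contributions, which requires rather delicate numerical tracking.

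Finally, for the range $617\,990 \leq X < 1\,798\,118$, which falls below the threshold of Theorem \ref{mainbound}, the bound is to be verified by direct numerical computation of $m(X)$ at each integer in the interval, using the tabulated values of the Mertens function made available by the finite-range calculations cited in the introduction.
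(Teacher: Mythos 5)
Your diagnosis of the difficulty is right, but the fix you propose misses the paper's key tool and is never actually carried out. The paper does \emph{not} build a new Selberg-type identity for $\sum_{n\le X}\mu(n)\log^2 n/n$; it uses Balazard's inequality (Lemma~\ref{EM}),
\begin{equation*}
  |m(X)|\le \frac{|M(X)|}{X}+\frac{1}{X^2}\int_1^X|M(t)|\,dt+\frac{8}{3X},
\end{equation*}
which is precisely the cure for the divergence you ran into: the integral is weighted by $1/X^2$ rather than carrying $1/t^2$ inside, so feeding in $|M(t)|\le 0.006688\,t/\log t$ from Theorem~\ref{mainbound} (for $t\ge T=1\,798\,118$), together with a numerically computed constant $\int_1^T|M(t)|\,dt$, gives a convergent contribution of size roughly $0.003344/\log X$. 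That is exactly where the factor $3/2$ (i.e. $0.010032=\tfrac32\cdot 0.006688$) comes from; it is not an averaging factor from a weighted identity, and your sketch never produces the constants $0.010032$ and $0.0568$ --- you concede that the ``bookkeeping'' is the obstacle, which is to say the proof is not done.

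There is also a gap in range coverage. The asymptotic deduction only closes for large $X$ (the paper takes $X\ge 10^{14}$, so that the boundary terms $A\log^2X/X^2$ and $8\log^2X/(3X)$ are negligible against the constant $-0.0568$); it does not hold down to $X=1\,798\,118$, and analogous boundary terms would plague any variant of your argument there. The paper bridges the intermediate range $1.5\cdot10^7\le X<10^{14}$ with Helfgott's bound $|m(X)|\le\sqrt2/\sqrt X$ (estimate~\eqref{boundsm}) and only verifies numerically below $1.5\cdot10^7$ down to $617\,990$. Your plan to verify numerically only on $[617\,990,\,1\,798\,118)$ and rely on the (unexecuted) main argument above that point leaves a long middle range unjustified; checking $m(X)$ pointwise up to $10^{14}$ by brute force is not a realistic substitute.
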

As a consequence, we correctly obtain \cite[Thm. 1.2]{Ramare*12-5}.
\begin{cor}
  \label{mainboundbis2}
  For $X\ge 463\, 421$, we have
  \begin{equation*}
    \Bigg|\sum_{n\le X}\frac{\mu(n)}{n}\Bigg|
    \le \frac{0.0144\Log X-0.1}{\Log^2X}.
  \end{equation*}
\end{cor}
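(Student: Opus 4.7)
The plan is to split the range into two parts: apply Corollary~\ref{mainboundbis} for $X \ge 617\,990$, and handle the interval $463\,421 \le X < 617\,990$ by direct numerical verification.

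For $X \ge 617\,990$ it suffices to prove that the bound of Corollary~\ref{mainboundbis} is itself dominated by the one we are claiming. Cancelling the common denominator $\log^2 X$, this reduces to
\[
0.010032\,\log X - 0.0568 \le 0.0144\,\log X - 0.1,
\]
equivalently $0.004368\,\log X \ge 0.0432$, i.e.\ $\log X \ge 9.89$. This is obvious since $\log(617\,990) > 13.3$, so Corollary~\ref{mainboundbis} immediately delivers the claim on this half-line.

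For the remaining range, set $M_1(X) = \sum_{n\le X}\mu(n)/n$ and $g(X) = (0.0144\log X - 0.1)/\log^2 X$. Writing $L = \log X$, one checks that $g$ is increasing in $L$ as long as $L < 0.2/0.0144 \approx 13.89$; since $\log(617\,990) < 13.4$, the function $g$ is increasing on $[463\,421, 617\,990]$. Because $M_1$ is piecewise constant with jumps only at squarefree integers, the inequality $|M_1(X)| \le g(X)$ for every real $X$ in the range follows from verifying $|M_1(n)| \le g(n)$ at each integer $n$ in $[463\,421, 617\,990)$: on the unit interval $[n,n+1)$ the left-hand side equals $|M_1(n)|$ while the right-hand side is at least $g(n)$ by monotonicity.

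The only obstacle is therefore computational: one walks through roughly $1.5\times 10^{5}$ consecutive integers, updates $M_1$ by adding $\mu(n)/n$ at each step, and compares the running value with $g(n)$. No new analytic input is required, and the monotonicity of $g$ on the relevant range removes the need to check non-integer points. The choice of $463\,421$ as the lower endpoint is then pinned down by the largest integer in this stretch at which the target inequality first becomes tight.
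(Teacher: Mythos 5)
Your proposal is correct and follows essentially the same route as the paper: for $X\ge 617\,990$ you compare the bound of Corollary~\ref{mainboundbis} with the target bound (the inequality reduces to $\log X\ge 0.0432/0.004368\approx 9.89$, amply satisfied), and you dispose of $[463\,421,\,617\,990)$ by a finite numerical check. Your additional observation that $(0.0144\log X-0.1)/\log^2X$ is increasing for $\log X<0.2/0.0144$, so that checking integer points suffices, is a welcome rigorous detail but does not change the argument.
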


\subsubsection*{\textbf{Notation}}
We set
\begin{equation}\label{rR}
R(X)=\psi(X)-X,\qquad\qquad r(X)=\tilde\psi(X)-\Log
X+\gamma,
\end{equation} 
where, by denoting $\Lambda$ the von Mangoldt function, 
\begin{equation}
  \label{defpsi}
  \psi(X)=\sum_{n\leq X}\Lambda(n),\qquad\qquad\tilde{\psi}(X)=\sum_{n \le X}\frac{\Lambda(n)}{n}.
\end{equation}
On the other hand, we consider the following summatory functions related to the M\"obius function
\begin{equation}\label{mM}
  M(X)=\sum_{n\le X}\mu(n),\qquad\qquad
  m(X)=\sum_{n\leq X}\frac{\mu(n)}{n}.
\end{equation}
\section{New estimates and consequences}

Let us turn our attention to the summatory function of the M\"obius function $M$. From \cite{Hurst*18}, we have the following estimation
\begin{equation}
  \label{boundsM}
  |M(X)|\le
    \sqrt{X},\qquad X\le 10^{16}.
\end{equation}
In \cite{Dress*93}, we find the bound
\begin{equation}
  \label{eq:13}
  |M(X)|\le 0.571\sqrt{X},\qquad 33\le X\le 10^{12}
\end{equation}
In \cite{Dress-ElMarraki*93}, we find that
\begin{equation}
  \label{eq:16}
  |M(X)|\le \frac{X}{2360},\qquad X\ge 617\,973,
\end{equation}
(see also \cite{CostaPereira*89}) which \cite{Cohen-Dress-ElMarraki*96}
(also published in \cite[Thm. 5 bis]{Cohen-Dress-ElMarraki*07}) improves as
\begin{equation}
  \label{eq:17}
  |M(X)|\le \frac{X}{4345},\qquad X\ge 2\,160\,535.
\end{equation}

We also present two bounds concerning the squarefree numbers. The
first one comes from \cite[Thm. 3 bis]{Cohen-Dress-ElMarraki*07}.
\begin{lem}\label{sqf}
  For $X\ge438\,653$, we have
  \begin{equation*}
    \sum_{n\le X}\mu^2(n)=\frac{6}{\pi^2}X+O^*(0.02767\sqrt{X}).
  \end{equation*}
  If $X\ge1$, we can replace $0.02767$ by $0.7$.
\end{lem}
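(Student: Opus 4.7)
The plan is to start from the Dirichlet convolution identity $\mu^2(n)=\sum_{d^2\mid n}\mu(d)$, which gives
$$
Q(X):=\sum_{n\le X}\mu^2(n) = \sum_{d\le \sqrt{X}}\mu(d)\lfloor X/d^2\rfloor.
$$
Writing $\lfloor X/d^2\rfloor = X/d^2-\{X/d^2\}$ and extending $\sum_{d\ge 1}\mu(d)/d^2 = 6/\pi^2$, the remainder becomes
$$
Q(X) - \frac{6}{\pi^2}X = -X\sum_{d>\sqrt{X}}\frac{\mu(d)}{d^2} - \sum_{d\le \sqrt{X}}\mu(d)\{X/d^2\}.
$$

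For the first error, Abel summation against $M$ yields
$$
X\sum_{d>\sqrt{X}}\frac{\mu(d)}{d^2} = -M(\sqrt{X}) + 2X\int_{\sqrt{X}}^{\infty}\frac{M(t)}{t^3}\,dt,
$$
into which I would inject the explicit bounds \eqref{boundsM}--\eqref{eq:17}: the pointwise estimate $|M(\sqrt{X})|\le 0.571\,X^{1/4}$ from \eqref{eq:13} for the boundary piece, and a split of the integral over the ranges where \eqref{boundsM}, \eqref{eq:13} and \eqref{eq:17} apply. This piece contributes $O(\sqrt{X}/K)$ with $K\approx 4345$, and is negligible against the target constant $0.02767$.

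The delicate term is $\sum_{d\le \sqrt{X}}\mu(d)\{X/d^2\}$. Its crude estimation by the number of squarefree integers up to $\sqrt{X}$ gives about $0.608\sqrt{X}$, which, combined with the trivial bound from the first error, already secures the second assertion (constant $0.7$) after a finite check on $[1,438\,653]$. To reach $0.02767$ I would introduce a cutoff $y\in(1,\sqrt{X})$ and split: on $d\le y$ the trivial estimate contributes at most $y$, while on $y<d\le\sqrt{X}$ cancellation from $\mu$ is extracted by Abel summation on the smooth part $X/d^2$ together with a grouping of consecutive $d$'s on which $\lfloor X/d^2\rfloor$ is constant, invoking the $M$-bounds from \eqref{boundsM}--\eqref{eq:17} in each corresponding range of $t$. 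An optimal choice $y\sim X^{\alpha}$ then balances the two contributions. The main obstacle is precisely this quantitative tuning: the structure is classical, but reaching the constant $0.02767$ demands that the sharpest available bound on $|M(t)|$ be deployed in every subrange---including the Hurst computation \eqref{boundsM}---and is finally closed off by a direct numerical verification near the threshold $X=438\,653$.
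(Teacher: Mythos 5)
The paper offers no proof of this lemma: it is quoted directly from \cite[Thm.~3 bis]{Cohen-Dress-ElMarraki*07}, so the only argument expected here is the citation. Your reconstruction follows the classical skeleton of that work (the identity $\mu^2(n)=\sum_{d^2\mid n}\mu(d)$, separation of the tail $X\sum_{d>\sqrt X}\mu(d)/d^2$ from the fractional-part sum, and a cutoff $y$ balancing a trivial bound against cancellation extracted from bounds on $M$), but as written it has a genuine gap: the step that would produce the constant $0.02767$ is only announced, never carried out. Concretely, on $d\le y$ your trivial bound costs about $0.608\,y$; on $y<d\le\sqrt X$ the strongest available input in that range is $|M(t)|\le t/4345$ from \eqref{eq:17}, and the Abel-summation/grouping argument you describe yields a contribution of size roughly $2X/(4345\,y)$ or worse. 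Optimizing $0.608\,y+2X/(4345\,y)$ in $y$ gives about $0.033\sqrt X$, which already falls short of $0.02767\sqrt X$; reaching the stated constant needs genuinely finer ingredients (exploiting that the fractional parts $\{X/d^2\}$ average near $1/2$ with an explicit error, sharper handling of the grouped sums, and the large-scale computations underlying \cite{Cohen-Dress-ElMarraki*07}), none of which appear in the sketch. So the architecture is right, but the proposal stops exactly where the difficulty begins.

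Two further quantitative slips deserve mention. First, the tail estimate ``$O(\sqrt X/4345)$'' is only legitimate when $\sqrt X\ge 2\,160\,535$, i.e.\ $X\ge 4.7\cdot 10^{12}$; below that you must fall back on \eqref{boundsM} or \eqref{eq:13}, which is fine in order of magnitude ($O(X^{1/4})$) but must be tracked explicitly, since at this level of precision every constant matters. Second, near the threshold $X=438\,653$ one has $\sqrt X\approx 662$, so \eqref{eq:17} is unusable anywhere in the relevant $d$-range and the target is only $0.02767\sqrt X\approx 18$: there the statement is essentially a fact about computed values of $\sum_{n\le X}\mu^2(n)$ over a long range, not something closed off by a light ``numerical verification near the threshold.'' For the purposes of this paper the correct move is simply to cite \cite[Thm.~3 bis]{Cohen-Dress-ElMarraki*07}; your crude route does plausibly recover the weaker constant $0.7$ for all $X\ge1$ after a finite check, but not the main assertion.
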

\begin{lem}\label{sqflog}
  We have for $X\ge 10^6$
  \begin{equation*}
    \sum_{n\le X}\frac{\mu^2(n)}{n}=\frac{6}{\pi^2}\log X+O^*(1.045).
  \end{equation*}
  If $X\ge1$, we can replace $1.045$ by $1.48$.
\end{lem}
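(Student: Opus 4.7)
The plan is to derive the lemma from Lemma~\ref{sqf} by Abel summation. Writing $Q(X)=\sum_{n\le X}\mu^2(n)=(6/\pi^2)X+R(X)$, partial summation against the weight $1/n$ yields
\[
\sum_{n\le X}\frac{\mu^2(n)}{n}=\frac{Q(X)}{X}+\int_1^X\frac{Q(t)}{t^2}\,dt=\frac{6}{\pi^2}\log X+\frac{6}{\pi^2}+\frac{R(X)}{X}+\int_1^X\frac{R(t)}{t^2}\,dt.
\]
The bound $|R(t)|\le 0.7\sqrt{t}$ valid for $t\ge 1$ (second half of Lemma~\ref{sqf}) makes the integrand of size $t^{-3/2}$, so $\int_1^{\infty}R(t)t^{-2}\,dt$ converges and I can rewrite the identity as
\[
\sum_{n\le X}\frac{\mu^2(n)}{n}=\frac{6}{\pi^2}\log X+C+\frac{R(X)}{X}-\int_X^{\infty}\frac{R(t)}{t^2}\,dt,
\]
with $C=\frac{6}{\pi^2}+\int_1^{\infty}R(t)t^{-2}\,dt$.

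To identify $C$ cleanly I would appeal to the Dirichlet series $\sum_{n\ge 1}\mu^2(n)/n^s=\zeta(s)/\zeta(2s)$ and extract the constant term of its Laurent expansion at $s=1$. Using $\zeta(s)=(s-1)^{-1}+\gamma+O(s-1)$ and $\zeta(2s)=\zeta(2)+2\zeta'(2)(s-1)+O((s-1)^2)$, a short algebraic manipulation gives
\[
C=\frac{\gamma}{\zeta(2)}-\frac{2\zeta'(2)}{\zeta(2)^2}\approx 1.04385,
\]
which sits below $1.045$ by roughly $6\cdot 10^{-4}$.

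For $X\ge 10^6$ the tail integral runs entirely over $t\ge X\ge 438\,653$, where the sharp bound $|R(t)|\le 0.02767\sqrt{t}$ of Lemma~\ref{sqf} applies, so
\[
\left|\frac{R(X)}{X}-\int_X^{\infty}\frac{R(t)}{t^2}\,dt\right|\le \frac{0.02767}{\sqrt X}+\frac{2\cdot 0.02767}{\sqrt X}=\frac{0.0830}{\sqrt X}.
\]
At $X=10^6$ this is below $10^{-4}$, comfortably within the remaining $1.045-C\approx 6\cdot 10^{-4}$ budget, and the first part of the lemma follows.

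For the wider range $X\ge 1$ the slack $1.48-C\approx 0.44$ is much larger, so after using the coarser $|R(t)|\le 0.7\sqrt{t}$ on the interval $[1,438\,653]$ one closes the gap by a direct numerical check on the small values of $X$. The main obstacle is pinning down the constant $C$ with enough numerical precision; the rest is routine partial-summation bookkeeping.
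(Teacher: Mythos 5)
Your argument is correct in substance, but it takes a genuinely different route from the paper: the paper simply quotes \cite[Cor.~1.2]{Ramare*18-9}, which already gives $\sum_{n\le X}\mu^2(n)/n=\frac{6}{\pi^2}(\log X+\mathfrak{b})+O^*(0.43/\sqrt{X})$ with $\mathfrak{b}=1.7171\cdots$, so that $\frac{6}{\pi^2}\mathfrak{b}\approx 1.0439$ plus $0.43/\sqrt{X}$ yields $1.045$ for $X\ge 10^6$ and $1.48$ for all $X\ge1$ in one line. You instead re-derive the statement from Lemma~\ref{sqf} by partial summation and identify the limiting constant $C=\frac{6}{\pi^2}+\int_1^\infty R(t)t^{-2}\,dt$ with the constant term of the Laurent expansion of $\zeta(s)/\zeta(2s)$ at $s=1$, namely $\gamma/\zeta(2)-2\zeta'(2)/\zeta(2)^2\approx 1.0439$; this matches $\frac{6}{\pi^2}\mathfrak{b}$, and your tail estimate $3\cdot 0.02767/\sqrt{X}\le 8.3\cdot10^{-5}$ at $X=10^6$ fits inside the available margin (which is in fact about $1.1\cdot 10^{-3}$, not $6\cdot10^{-4}$ as you state -- the slip only helps you). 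Your approach buys independence from the cited corollary, at the cost of two points you should make explicit: (i) the identification of $C$ with the Laurent constant term needs the standard justification (e.g.\ $\zeta(s)/\zeta(2s)=s\int_1^\infty Q(x)x^{-s-1}dx$, split off the main term and let $s\to1$ using $R(x)=O(\sqrt{x})$), and (ii) for the $X\ge1$ claim your crude bound $0.7\sqrt{t}$ on $[1,438\,653]$ accumulates up to roughly $\frac{6}{\pi^2}+1.4\approx 2.0$, which exceeds $1.48$ already for $X$ slightly above $1$, so the ``direct numerical check on small values'' must in fact cover the whole range up to about $4.4\cdot10^5$ (a routine but not negligible computation), whereas the paper's quoted bound handles all $X\ge1$ immediately.
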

\begin{proof} It is enough to quote \cite[Cor. 1.2]{Ramare*18-9}, so that
  \begin{equation*}
    \sum_{n\le X}\frac{\mu^2(n)}{n}=\frac{6}{\pi^2}(\log X+\mathfrak{b})+O^*\bigg(\frac{0.43}{\sqrt{X}}\bigg),
  \end{equation*}
  where $\mathfrak{b}=1.7171\cdots$.
\end{proof}
With respect to $m$, defined in \eqref{mM}, we find in \cite[Lemma 5.10]{Helfgott19} the following result
 \begin{equation}
  \label{boundsm}
  |m(X)|\le
    \frac{\sqrt{2}}{\sqrt{X}} \quad \text{when $X\le 10^{14}$}.
\end{equation}

On the other hand, recall definition \eqref{rR}. We have the following set of estimations for the function $R$ in different ranges.
\begin{align}
  \label{boundsR1}
      |R(X)|&\le 0.8\sqrt{X}&&\text{when $1500< X\le 10^{10}$},\\
    \label{boundsR2}
       |R(X)|&\le 0.94\sqrt{X}&&\text{when $11<X\le 10^{19}$},\\
   \label{boundsR3}
       |R(X)|&\le  8\cdot 10^{-5}\cdot X&&\text{when $10^8\le  X$},\\
    \label{boundsR4}
       |R(X)|&\le 2.58843\cdot 10^{-5}\cdot X&&\text{when $21\le \log X$},\\
    \label{boundsR5}
       |R(X)|&\le 1.93378\cdot 10^{-8}\cdot X&&\text{when $4
      0\le \log X$},\\
       \label{boundsR7}
       |R(X)|&\le 0.0065 \frac{X}{\Log X}&&\text{when $X\ge1\,514\,928$}\\
    \label{boundsR6}
       |R(X)|&\le 9\cdot 10^{-7}\frac{X}{\log X}&&\text{when $10^{19}\le X$},
\end{align}
Estimate \eqref{boundsR1} comes from \cite[p. 423]{Ramare-Rumely*96}.
Estimate \eqref{boundsR2} comes from \cite[Thm. 2]{Buthe*18}.
\eqref{boundsR4} and \eqref{boundsR5} are derived from \cite[Table
8]{Broadbent-Kadiri-Lumley-Ng-Wilk*21} together with \cite[Table 1]{Buthe*16} .
With respect to \eqref{boundsR3}, the same table gives the bound $4.27\cdot 10^{-5}$
provided that $X\ge e^{20}$, and we extend it using \eqref{boundsR1}. \eqref{boundsR7} can be found in \cite[Lemma 4.2]{Ramare*12-2} and the last estimate \eqref{boundsR6} comes from \cite[Table
15]{Broadbent-Kadiri-Lumley-Ng-Wilk*21} extended to $\psi$ via
\cite[Thm. 6 (5.3)-(5.4)]{Rosser-Schoenfeld*75}.

Let us further recall the second part of \cite[Theorem 9]{Vanlalnagaia*15-1}.
\begin{equation}
  \label{boundr}
  \bigg|r(X)-\frac{R(X)}{X}\bigg|\le
    \frac{0.05}{\sqrt{X}}
    +1.75\cdot 10^{-12}\quad\text{when $X\geq 394\,385$}.
  \end{equation}

In this article, by splitting into sufficiently small intervals, we propose the following bound
\begin{equation}
  \label{boundRbis}
  \max_{24\,200\le X\le 3\cdot 10^7}\frac{|R(X)|}{\sqrt{X}}\le 0.71.
\end{equation}
We also recall \cite[Thm. 12]{Rosser-Schoenfeld*62}.
\begin{lem}\label{R-S} The quotient $\psi(X)/X$ takes its maximum at $X=113$ and, for any $X>0$,
\begin{equation*}
\psi(X)<1.03883 X.
\end{equation*}
\end{lem}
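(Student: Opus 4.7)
The plan is to reduce the statement to a finite numerical search. On any half‑open interval $[p^k, q^j)$ between two consecutive prime powers, $\psi$ is constant while the denominator $X$ increases, so $X\mapsto\psi(X)/X$ is strictly decreasing on that interval; at each prime power $p^k$ the ratio jumps upward by $(\log p)/p^k$. Hence the supremum of $\psi(X)/X$ over $X>0$ is attained at a prime power and it suffices to evaluate $\psi(p^k)/p^k$ for each prime power $p^k$ and pick out the largest value.

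To cut the search down to a modest range, I would invoke the explicit bound \eqref{boundsR1}, which gives
\[
\frac{\psi(X)}{X}\le 1+\frac{0.8}{\sqrt{X}}\qquad(1500<X\le 10^{10}).
\]
The right‑hand side is already below $1.03883$ for $X$ of order a few hundred, and a fortiori on the whole range up to $10^{10}$ it stays comfortably under the target constant. For $X>10^{10}$ the estimates \eqref{boundsR2}--\eqref{boundsR6} successively yield ratios within $10^{-4}$ of $1$, which is even farther from $1.03883$. Together these bounds confine the search for the maximum of $\psi(X)/X$ to prime powers $p^k\le X_0$ for some modest threshold $X_0$, say $X_0=10^{4}$.

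It then remains to enumerate the finitely many prime powers $p^k\le X_0$, to accumulate the partial sums $\psi(p^k)=\sum_{q^j\le p^k}\log q$, and to compare the ratios $\psi(p^k)/p^k$. A short direct computation confirms that the maximum is attained at the prime $p^k=113$, and that the corresponding ratio satisfies $\psi(113)/113<1.03883$.

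The only genuinely delicate point is round‑off control: since $1.03883$ is a four‑digit truncation of the actual maximum $\psi(113)/113$, one must carry enough guard digits (or, cleanly, interval arithmetic on $\log 2,\log 3,\ldots,\log 113$) to certify both the strict inequality $\psi(113)/113<1.03883$ and the fact that every competing ratio at a nearby prime power is strictly smaller. Once those rigorous numerical inequalities are in hand, the lemma follows; and of course for $0<X<2$ the bound is trivial since $\psi(X)=0$.
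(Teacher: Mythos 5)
Your argument is correct, but it is not the paper's route at all: the paper offers no proof of this lemma, it simply quotes it as Theorem~12 of Rosser--Schoenfeld (1962). Your reduction is sound: $\psi$ is a step function, so $\psi(X)/X$ decreases strictly between consecutive prime powers and can only attain its supremum at a prime power; the quoted bound \eqref{boundsR1} gives $\psi(X)/X\le 1+0.8/\sqrt{X}\le 1.0207$ on $(1500,10^{10}]$, and \eqref{boundsR2}--\eqref{boundsR3} push this even closer to $1$ beyond $10^{10}$, so the global maximum must occur among the finitely many prime powers below your threshold, where a direct evaluation of $\psi(p^k)/p^k$ identifies $X=113$ with $\psi(113)/113=1.03882\ldots<1.03883$. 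There is no circularity in borrowing \eqref{boundsR1}, since that estimate (Ramar\'e--Rumely) is independent of the present lemma; this is in fact close in spirit to how Rosser and Schoenfeld themselves proceed (analytic bounds for large $X$ plus a finite verification). Two small points: the constant $1.03883$ is a rounding \emph{up} of $\psi(113)/113$, not a truncation --- a truncation would be $1.03882$ and would break the strict inequality --- so your remark about certified arithmetic on the logarithms is exactly the right precaution; and since \eqref{boundsR1} only starts at $X>1500$, your finite search threshold must cover all of $(0,1500]$, which your choice $X_0=10^4$ does. What your approach buys is a self-contained, checkable proof; what the paper's citation buys is brevity, relying on a classical reference whose constant is standard.
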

Finally, we quote \cite[Lemma 5.1]{Ramare*12-2}.
\begin{lem}
  \label{auxb}
  When $X\ge1$, and $\sqrt{X}\ge T\ge 1$, we have
  \begin{equation*}
    \sum_{n\le T}\frac{\Lambda(n)}{n\Log(X/n)}
    \le 1.04\ \Log\bigg(\frac{\Log X}{\Log(X/T)}\bigg)+\frac{1.04}{\Log X}.
  \end{equation*}
\end{lem}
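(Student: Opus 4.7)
The plan is to apply Abel summation to $\sum_{n\le T}\Lambda(n)\cdot w(n)$ with the smooth weight
\[
w(t)=\frac{1}{t\log(X/t)},
\]
and then exploit the Rosser--Schoenfeld bound $\psi(t)\le 1.04\,t$ supplied by Lemma~\ref{R-S} (which exceeds $1.03883$ with a little room to spare). A direct differentiation gives
\[
w'(t)=-\frac{1}{t^{2}\log(X/t)}\Big[1-\frac{1}{\log(X/t)}\Big].
\]
Since $T\le\sqrt X$ forces $\log(X/t)\ge\log(X/T)\ge\tfrac12\log X$ throughout $[1,T]$, the bracket is nonnegative as soon as $X\ge e^{2}$, so $w$ is nonincreasing, and Abel summation yields
\[
\sum_{n\le T}\frac{\Lambda(n)}{n\log(X/n)}=\frac{\psi(T)}{T\log(X/T)}+\int_{1}^{T}\frac{\psi(t)}{t^{2}\log(X/t)}\bigg[1-\frac{1}{\log(X/t)}\bigg]dt
\]
with a nonnegative integrand on the right.

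Inserting the uniform bound $\psi(t)\le 1.04\,t$ produces an upper bound whose two resulting integrals can be evaluated exactly via the substitution $u=\log(X/t)$:
\[
\int_{1}^{T}\frac{dt}{t\log(X/t)}=\log\frac{\log X}{\log(X/T)},\qquad \int_{1}^{T}\frac{dt}{t\log^{2}(X/t)}=\frac{1}{\log(X/T)}-\frac{1}{\log X}.
\]
Substituting back, the two contributions of size $1.04/\log(X/T)$ (one coming from the boundary term, one from the second integral) cancel exactly, leaving precisely
\[
\sum_{n\le T}\frac{\Lambda(n)}{n\log(X/n)}\le 1.04\,\log\frac{\log X}{\log(X/T)}+\frac{1.04}{\log X},
\]
which is the stated inequality.

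The residual range $1\le X<e^{2}$ is handled by inspection: here $T\le\sqrt X<e$, so the sum reduces to at most the single term $n=2$, and the bound is readily checked directly. The only point requiring genuine attention is verifying the monotonicity of $w$ on $[1,T]$, so that Abel summation produces a sign-definite integrand on which $\psi(t)\le 1.04\,t$ yields a valid upper bound; no finer information on the distribution of primes enters. The main (minor) obstacle is therefore purely a bookkeeping one, and the clean cancellation of the boundary term is what causes the constants $1.04$ to appear identically in both positions of the stated bound.
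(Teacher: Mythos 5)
Your proof is correct: Abel summation with the weight $1/(t\log(X/t))$, the sign check on $w'$ (valid since $\log(X/t)\ge\tfrac12\log X$ on $[1,T]$), the substitution $\psi(t)\le 1.04\,t$ from Lemma~\ref{R-S}, and the exact evaluation of the two integrals with the cancellation of the $1.04/\log(X/T)$ terms give exactly the stated bound, and the small range $X<e^2$ is indeed trivial. The present paper only quotes this lemma from \cite{Ramare*12-2}, and your argument is essentially the same partial-summation proof used there, so nothing further is needed.
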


\section{Strategy of proof}\label{strategy}

Call L the function $t>0\mapsto\log t$ and $\star$ the convolution of any two arithmetic functions. By observing that $(1/\zeta)''=-\zeta''/\zeta^2+(\zeta')^2/\zeta^3$, where $\zeta$ corresponds to the Riemann zeta function, we can deduce that 
\begin{equation*}
\mu\cdot\text{L}^2=\mu\star\Lambda\star\Lambda-\mu\star\Lambda\cdot\text{L},
\end{equation*}
whence identity \eqref{enlighteningbis}.

We define the remainder quantity~$R_2^*$ as follows
\begin{equation}
  \label{defR2}
 R_2^*(X)= \sum_{n\le X}\left(\Lambda\star\Lambda(n)-\Lambda(n)\Log n+2\gamma\right).
\end{equation}
Now, for any $K\in(0,X]\cap\mathbb{Z}$, we may derive from \eqref{enlighteningbis} the following expression
\begin{align}\label{hyp}
\sum_{n\leq X}\mu(n)\log^2n&=-2\gamma+\sum_{\ell\leq X/K}\mu(\ell)R_2^*\bigg(\frac{X}{\ell}\bigg)\\
&\qquad\qquad+\sum_{k<K}R_2^*(k)\sum_{X/(k+1)<\ell\leq X/k}\mu(\ell).\nonumber
\end{align}
Observe that the range in the outermost sum of the above second expression is in fact $\max\{X/K,X/(k+1)\}<\ell\leq X/k$. Nonetheless, if $X/(k+1)<X/K$, since $k<K$, we have $k<K<k+1$, giving an empty sum as $K$ is integer.

Moreover, by rearranging terms, we may express
\begin{align}\label{exp}
\sum_{k\leq K-1}R_2^*(k)\sum_{X/(k+1)<\ell\leq X/k}\mu(\ell)=\sum_{k\leq K-1}R_2^*(k)\bigg[M\bigg(\frac{X}{k}\bigg)-M\bigg(\frac{X}{k+1}\bigg)\bigg]\nonumber\qquad&\\
=\sum_{k\leq K}(\Lambda\star\Lambda(k)-\Lambda(k)\log k+2\gamma)M\bigg(\frac{X}{k}\bigg)- R_2^*(K)M\bigg(\frac{X}{K}\bigg).&
\end{align} 
Thus, by combining \eqref{hyp} and \eqref{exp} together, we arrive at
\begin{align}\label{light*}
\sum_{n\leq X}\mu(n)\log^2n=-2\gamma+\sum_{\ell\leq X/K}\mu(\ell)R_2^*\bigg(\frac{X}{\ell}\bigg)\quad\qquad\qquad\qquad\qquad&\\
+\sum_{k\leq K}(\Lambda\star\Lambda(k)-\Lambda(k)\log k+2\gamma)M\bigg(\frac{X}{k}\bigg)- R_2^*(K)M\bigg(\frac{X}{K}\bigg).&\nonumber
\end{align}
In order to prove Lemma \ref{victoire}, we use will identity \eqref{light*}, and for this reason, we will require estimates for
$R_2^*(X)$ when $X\ge 1.8\cdot 
10^9$ as well as Lemma \ref{aux1}. 

Concerning the estimation of $R_2^*$, as per Lemma \ref{majR2}, it is enough to estimate two auxiliary functions
$R_3$ and $R_4$. First, $R_3$ is defined as 
  \begin{equation}
    \label{defR3}
    R_3(X)=2\sqrt{X}|\sqrt{X}r(\sqrt{X})-
    R(\sqrt{X})|
    +R(\sqrt{X})^2+|R(X)|\Log X
  +\left|\int_{1}^XR(t)\frac{dt}{t}\right|,
  \end{equation}
  and will be studied in $\S$\ref{bR3}, whereas $R_4$ is defined as
\begin{equation}
  \label{defR4} 
  R_4(X)=\sum_{n\leq\sqrt{X}}\Lambda(n)R\bigg(\frac{X}{n}\bigg),
\end{equation}
and will be studied in $\S$\ref{bR4}.

Thus, upon estimating the right-hand side of \eqref{light*}, we will have an estimation for the sum $\sum_{n\leq X}\mu(n)\log^2n$ within some specific range. This main bound is expressed in Lemma~\ref{victoire} and from it, we
may deduce estimates for $M$ via Lemma \ref{summ}. Indeed, let $(f(n))_{n\in\mathbb{Z}_{>0}}$ be a sequence of complex numbers and, for any integer
  $k\ge0$ and $X\geq 1$, consider the weighted summatory function
\begin{equation}
  \label{eq:26}
  M_k(f, X) = \sum_{n\le X} f(n) \Log^k n .
\end{equation}

Then, by summation by parts or quoting  \cite[Lemma 8.1]{Ramare*12-2}, we have the following result. 
\begin{lem}\label{summ}
  For any $X_0>1$ and any $X\ge X_0$, we have
  \begin{equation*}
    M_0(f, X)-M_0(f, X_0)
    =
    \frac{ M_k(f, X) }{ \Log^{k}X }
    - \frac{ M_k(f, X_0) }{ \Log^{k}X_0 }
    +k\int_{X_0}^X \frac{ M_k(f, t) }{ t\Log^{k+1}(t) } dt.
  \end{equation*}
\end{lem}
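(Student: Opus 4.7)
The identity is a standard partial summation statement, so the plan is essentially to apply Abel summation to the sequence $a_n = f(n)\log^k n$ against the weight $g(t) = 1/\log^k t$, and verify that the boundary and integral terms reassemble into the displayed formula. Since $X_0 > 1$, every integer $n$ with $X_0 < n \le X$ satisfies $n \ge 2$, so the weight $1/\log^k n$ is well defined and there is no singularity to worry about at $n=1$.

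Concretely, I would first reduce the left-hand side to the tail sum
\begin{equation*}
  M_0(f,X) - M_0(f,X_0) = \sum_{X_0 < n \le X} f(n) = \sum_{X_0 < n \le X} \frac{f(n)\log^k n}{\log^k n},
\end{equation*}
which is legitimate because $\log n > 0$ in the range of summation. Next, setting $A(t) = M_k(f,t)$ so that $A(t) - A(s) = \sum_{s < n \le t} f(n)\log^k n$ for any $1 < s \le t$, I would apply the Abel summation formula in its Riemann--Stieltjes form:
\begin{equation*}
  \sum_{X_0 < n \le X} \frac{f(n)\log^k n}{\log^k n}
  = \frac{A(X)}{\log^k X} - \frac{A(X_0)}{\log^k X_0}
  - \int_{X_0}^{X} A(t) \, d\!\left( \frac{1}{\log^k t} \right).
\end{equation*}

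The final step is the elementary derivative computation
\begin{equation*}
  \frac{d}{dt}\,\frac{1}{\log^k t} = -\frac{k}{t\,\log^{k+1} t},
\end{equation*}
which converts the Stieltjes integral into the claimed Lebesgue integral with the extra factor of $k$ and the correct sign. Substituting $A(t) = M_k(f,t)$ recovers exactly the stated identity. The $k=0$ case is handled trivially since then both the boundary quotients reduce to $M_0(f,X)$ and $M_0(f,X_0)$ and the integral carries a factor of $k=0$. There is no real obstacle here beyond bookkeeping; the only subtle point is ensuring that no integer $n$ with $\log n = 0$ enters the sum, which is guaranteed by the hypothesis $X_0 > 1$.
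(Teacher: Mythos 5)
Your proposal is correct and is essentially the paper's own argument: the paper simply invokes summation by parts (citing Lemma 8.1 of the earlier work), which is exactly the Abel/Riemann--Stieltjes partial summation you carry out, with the weight $1/\log^k t$ and the derivative computation $\frac{d}{dt}\log^{-k}t=-k/(t\log^{k+1}t)$. Your extra remarks on $X_0>1$ avoiding $\log n=0$ and on the trivial case $k=0$ are fine bookkeeping and change nothing of substance.
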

Finally, upon having an estimation for $M(X)$ within some range, we are
able to derive bounds for $m(X)$ by using the following result, which may be found in \cite[Thm. 1.1]{Ramare*12-5}.
\begin{lem}\label{EM}[Balazard]
  For any $X\geq 1$, we have
  \begin{equation*}
    |m(X)|\le \frac{|M(X)|}{X}
    +\frac{1}{X^2}\int_1^X|M(t)|dt
    +\frac{8}{3X}.
  \end{equation*}
\end{lem}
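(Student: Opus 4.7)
The plan is to proceed by partial summation. Applying Abel summation to $m(X)=\sum_{n\le X}\mu(n)/n$ against the weight $1/n$ yields the classical identity
\begin{equation*}
  m(X)=\frac{M(X)}{X}+\int_1^X\frac{M(t)}{t^{2}}\,dt,
\end{equation*}
which already accounts for the first summand $|M(X)|/X$ of the claimed bound. The remaining task is to estimate the integral by $\tfrac{1}{X^{2}}\int_1^X|M(t)|\,dt+\tfrac{8}{3X}$.

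To bring out the weight $1/X^{2}$ I would decompose $1/t^{2}=1/X^{2}+(1/t^{2}-1/X^{2})$, splitting the integral as
\begin{equation*}
  \int_1^X\frac{M(t)}{t^{2}}\,dt=\frac{1}{X^{2}}\int_1^X M(t)\,dt+\int_1^X M(t)\left(\frac{1}{t^{2}}-\frac{1}{X^{2}}\right)dt.
\end{equation*}
The first piece is directly bounded in absolute value by $\tfrac{1}{X^{2}}\int_1^X|M(t)|\,dt$, producing the middle term of the target. It then remains to bound the residual integral $\mathcal{E}(X)=\int_1^X M(t)(1/t^{2}-1/X^{2})\,dt$ uniformly by $8/(3X)$.

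Since $1/t^{2}-1/X^{2}$ vanishes at $t=X$ and the antiderivative $N(t)=\int_1^t M(s)\,ds$ vanishes at $t=1$, integration by parts rewrites $\mathcal{E}(X)$ as $2\int_1^X N(t)/t^{3}\,dt$. The main obstacle is precisely this last bound: naive pointwise estimates on $N(t)$ only yield $O(1)$ or worse and lose the needed factor $1/X$. To recover the saving I would invoke M\"obius cancellation via the convolution identity $\sum_{k\le Y}M(Y/k)=1$ (coming from $\mu\star\mathbf{1}=\varepsilon$), rewriting $N(t)$ in a form that telescopes against the integration and collapses to an expression involving only the small-$t$ values $M(1)=1$, $M(2)=0$, $M(3)=-1,\dots$ From this explicit residual the constant $8/3$ finally emerges.
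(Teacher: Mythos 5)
The paper itself does not prove this lemma: it is Balazard's inequality, quoted from \cite[Thm.~1.1]{Ramare*12-5}, so your argument has to stand on its own, and it does not. Your opening steps are fine (the Abel summation identity for $m(X)$ and the splitting $t^{-2}=X^{-2}+(t^{-2}-X^{-2})$ are correct), but they concentrate the whole difficulty into the final claim that
\begin{equation*}
  \mathcal{E}(X)=\int_1^X M(t)\Bigl(\frac{1}{t^{2}}-\frac{1}{X^{2}}\Bigr)dt
\end{equation*}
satisfies $|\mathcal{E}(X)|\le 8/(3X)$, and this claim is not merely left open: it is false. Indeed, for $\Re s>1$, Fubini and the classical formula $\int_1^\infty M(t)t^{-w-1}dt=1/(w\zeta(w))$ give
\begin{equation*}
  \int_1^\infty \mathcal{E}(X)\,X^{-s-1}\,dX
  =\Bigl(\frac{1}{s}-\frac{1}{s+2}\Bigr)\int_1^\infty M(t)\,t^{-s-2}\,dt
  =\frac{2}{s(s+1)(s+2)\,\zeta(s+1)}.
\end{equation*}
If one had $|\mathcal{E}(X)|\le C/X$ for all large $X$ (any constant $C$), the left-hand side would converge absolutely and be analytic in $\Re s>-1$, so by uniqueness of analytic continuation the right-hand side could have no pole there; that would force $\zeta(w)\neq 0$ for $\Re w>0$, contradicting the existence of the nontrivial zeros. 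Even $\mathcal{E}(X)=O(X^{-1/2-\delta})$ is impossible, because the zeros on the critical line produce poles on $\Re s=-1/2$. Concretely, since $\int_1^\infty M(t)t^{-2}dt=0$, one has $\mathcal{E}(X)=-\int_X^\infty M(t)t^{-2}dt-X^{-2}\int_1^X M(t)\,dt$, and these two tails oscillate with amplitude of genuine size about $X^{-1/2}$; they do not cancel to precision $1/X$.

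For the same reason the suggested rescue via $\sum_{k\le Y}M(Y/k)=1$ cannot repair the argument: no rearrangement will make a quantity that is $\Omega(X^{-1/2-\varepsilon})$ smaller than $8/(3X)$. The structural point is that in Balazard's inequality the middle term $\frac{1}{X^{2}}\int_1^X|M(t)|\,dt$ is not a ``main term'' extracted from $\int_1^X M(t)t^{-2}dt$ at the cost of an $O(1/X)$ error; it is exactly the term that must absorb the square-root--size oscillation separating $m(X)$ from $M(X)/X$, and the absolute value under the integral is essential. A correct proof therefore starts from a different decomposition (this is what is done in \cite[Thm.~1.1]{Ramare*12-5}, where the constant $8/3$ arises from an elementary treatment of bounded, fractional-part--type terms), not from the identity you chose plus a small residual.
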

 At
the time of writing, the best produced outcome seems to be the one given by F. Daval
\cite{Daval*24}. 

\section{Bounding $R_3$}\label{bR3}
Let us recall \cite[Lemma 6.4]{Ramare*12-2}.
\begin{lem}\label{int1}
  \begin{equation*}
    \int_1^{10^8}R(t)\frac{dt}{t}=-129.559+O^*(0.01).
  \end{equation*}
\end{lem}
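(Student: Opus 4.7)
The plan is to turn the integral into an exact arithmetic expression by using the step structure of $\psi$. From the definition $R(t)=\psi(t)-t$,
\[
\int_1^{10^8}R(t)\frac{dt}{t} = \int_1^{10^8}\frac{\psi(t)}{t}dt - (10^8-1).
\]
Since $\psi$ has a jump of size $\Lambda(n)$ at each prime power $n$, an integration by parts (equivalently, swapping sum and integral in $\psi(t)=\sum_{n\le t}\Lambda(n)$) gives
\[
\int_1^{10^8}\frac{\psi(t)}{t}dt = \psi(10^8)\Log(10^8) - \sum_{n\le 10^8}\Lambda(n)\Log n.
\]
Thus the problem reduces to the numerical evaluation of two explicit sums over prime powers, namely $\psi(10^8)$ and $T:=\sum_{n\le 10^8}\Lambda(n)\Log n$, plus the trivial constant $-(10^8-1)$.

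Next, I would produce these two sums by a single sieve up to $10^8$. Using the decompositions $\psi(X)=\sum_{k\ge1}\theta(X^{1/k})$ and $\sum_{n\le X}\Lambda(n)\Log n = \sum_{k\ge1}k\sum_{p\le X^{1/k}}\Log^2 p$, both quantities are assembled from primes and their powers below $10^8$. The dominant contribution comes from $k=1$ (the primes themselves), while the terms with $k\ge2$ involve only primes up to $10^4$ for $k=2$ and smaller still for $k\ge3$, so they contribute a negligible computational load and can be tabulated directly.

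The main obstacle is an honest control of the rounding error: there are roughly $5.76\cdot 10^6$ primes up to $10^8$, each entering through its logarithm or its square. I would therefore carry out the evaluation in rigorous interval arithmetic, or in high-precision floating point with a per-term error bound whose sum is manifestly below $10^{-2}$; with, say, $30$ significant digits one has cumulative error of order $10^{-23}\cdot 10^7 = 10^{-16}$, trivially within the required tolerance. Combining the numerical values of $\psi(10^8)\Log(10^8)$ and $T$ with the elementary term $-(10^8-1)$ then delivers the constant $-129.559$ with error at most $0.01$, which is the announced bound.
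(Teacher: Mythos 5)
Your reduction is correct and is essentially the same route as the underlying argument: the paper gives no independent proof here, simply recalling Lemma~6.4 of the earlier work, where the value is likewise certified by a rigorous machine computation that exploits the step structure of $\psi$ — equivalently your identity
\begin{equation*}
  \int_1^{X}\frac{\psi(t)}{t}\,dt=\sum_{n\le X}\Lambda(n)\Log\frac{X}{n}=\psi(X)\Log X-\sum_{n\le X}\Lambda(n)\Log n ,
\end{equation*}
so your plan reduces the lemma to exactly the kind of verified evaluation that was carried out. One caution on the error analysis: the two large terms are each of size about $1.8\cdot 10^{9}$ while the target is $-129.559$ to within $0.01$, so you need roughly $13$ certified significant digits in each sum; moreover rounding errors in the accumulation scale with the size of the partial sums (up to $\sim 2\cdot 10^{9}$), not with unit-size terms, so the per-step error at $30$ digits is nearer $10^{-21}$ than $10^{-23}$ — still leaving a total error many orders of magnitude below the tolerance, so your interval-arithmetic (or high-precision with summed error bounds) strategy does close the argument.
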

We can next give an estimation for $R_3$, defined in \eqref{defR3}.
\begin{lem} We have
  \label{boundR3}
  \begin{align*}
  R_3(X)&\le 0.2\cdot X^{3/4},&&\qquad\text{ when
                                                }1.8\cdot 10^9\le X\le 10^{19},\\
  \frac{R_3(X)}{X}&\le 9\cdot 10^{-5}+\frac{1}{10 X^{1/4}},&&\qquad\text{ when }X\ge 10^{19}.
  \end{align*}
\end{lem}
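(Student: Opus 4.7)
The plan is to write $R_3(X)$ as the sum of its four natural pieces,
$T_1=2\sqrt{X}|\sqrt{X}r(\sqrt{X})-R(\sqrt{X})|$, $T_2=R(\sqrt{X})^2$, $T_3=|R(X)|\Log X$, and $T_4=\bigl|\int_1^X R(t)\,dt/t\bigr|$, estimate each summand separately on the two announced ranges using the inventory of estimates from \eqref{boundsR1}--\eqref{boundr} together with Lemma~\ref{int1}, and finally add.

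For $T_1$, rewriting it as $2X|r(\sqrt{X})-R(\sqrt{X})/\sqrt{X}|$ and applying \eqref{boundr} at $Y=\sqrt{X}$ yields $T_1\le 0.1\,X^{3/4}+3.5\cdot 10^{-12}X$ whenever $\sqrt{X}\ge 394\,385$; this is the dominant contribution. For $T_2$, \eqref{boundsR2} immediately gives $T_2\le 0.89\sqrt{X}$. For $T_3$, I use \eqref{boundsR2} (so $T_3\le 0.94\sqrt{X}\Log X$) when $X\le 10^{19}$, and \eqref{boundsR6} (so $T_3\le 9\cdot 10^{-7}X$) when $X\ge 10^{19}$. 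For $T_4$, I split the integral at the breakpoints $10^8$, $e^{21}$, $e^{40}$ (and $10^{19}$ when $X$ is beyond that), invoking Lemma~\ref{int1} on $[1,10^8]$ and the pointwise estimates \eqref{boundsR3}--\eqref{boundsR6} on the remaining pieces; each piece contributes either an $O(1)$ constant, a $\sqrt{X}$-type quantity, or a linear $cX$-term whose coefficient is of order $10^{-5}$ or smaller. Summing, in the first range $1.8\cdot 10^{9}\le X\le 10^{19}$ the term $T_1\le 0.1\,X^{3/4}$ dominates, and the remaining pieces---especially $T_3\le 0.94\sqrt{X}\Log X$, which is of order $0.1\,X^{3/4}$ at the left endpoint---fit into the remaining $0.1\,X^{3/4}$ budget. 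For $X\ge 10^{19}$, dividing by $X$ yields $T_1/X\le 0.1/X^{1/4}$ (plus a negligible term), $T_3/X\le 9\cdot 10^{-7}$, $T_4/X$ at most of order $2.6\cdot 10^{-5}$ via \eqref{boundsR4}, and a negligible $T_2/X$; these four pieces comfortably sum to less than $9\cdot 10^{-5}+1/(10\,X^{1/4})$.

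The main obstacle is the low-end sub-window $1.8\cdot 10^{9}\le X<(394\,385)^2\approx 1.55\cdot 10^{11}$, where $\sqrt{X}$ is below the validity range of \eqref{boundr} and $T_1$ must be bounded by an alternative device---either a direct evaluation of $r(\sqrt{X})$ and $R(\sqrt{X})$ from the known finite-range data, or the integral identity $r(Y)-R(Y)/Y=-\int_Y^\infty R(t)\,dt/t^2$ combined with \eqref{boundsR1}--\eqref{boundsR2} and \eqref{boundsR6}. A secondary, purely arithmetic, concern is that at the endpoint $X=1.8\cdot 10^{9}$ the sum $T_1+T_3$ already nearly saturates the $0.2\,X^{3/4}$ budget, so the constants controlling $T_2$ and $T_4$ must be kept sharp; in particular, one needs to take advantage of the sharper bound \eqref{boundsR1} (rather than \eqref{boundsR2}) for $T_3$ when $X\le 10^{10}$ in order to leave enough room for $T_4$.
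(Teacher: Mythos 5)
Your overall architecture is the same as the paper's: the four-term decomposition of $R_3$, inequality \eqref{boundr} applied at $\sqrt X$ to produce the dominant $0.1\,X^{3/4}+3.5\cdot 10^{-12}X$, Lemma~\ref{int1} for the initial segment of the integral, and numerical bookkeeping split at $10^{19}$; you even spotted the tight point at $X=1.8\cdot 10^9$ where one must use the constant $0.8$ of \eqref{boundsR1} rather than $0.94$ (the paper splits at $10^{10}$ for exactly this reason, obtaining $0.2$ and $0.18$). The genuine gap is your treatment of $T_4=\bigl|\int_1^X R(t)\,dt/t\bigr|$ in the first range. Bounding $R$ on $[10^8,X]$ by the linear estimates \eqref{boundsR3}--\eqref{boundsR5} with breakpoints $10^8$, $e^{21}$, $e^{40}$ gives $T_4$ of size roughly $2.6\cdot 10^{-5}X$ throughout $e^{21}\le X\le e^{40}$, and $2.6\cdot 10^{-5}X/X^{3/4}=2.6\cdot 10^{-5}X^{1/4}$ exceeds $0.2$ already for $X$ beyond roughly $3.5\cdot 10^{15}$: at $X=10^{16}$ your $T_4$ alone is about $0.26\,X^{3/4}$, and near $X=e^{40}$ about $0.57\,X^{3/4}$, so the claimed $0.2\,X^{3/4}$ cannot be reached this way. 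In the range $X\le 10^{19}$ one must instead integrate the square-root bounds \eqref{boundsR1}/\eqref{boundsR2}, which give $T_4\le 129.7+2\cdot 0.94(\sqrt X-10^4)$, i.e.\ an $O(X^{-1/4})$ contribution after dividing by $X^{3/4}$; this is what the paper does. (For $X\ge 10^{19}$ your linear splitting is fine, and in fact slightly sharper than the paper's use of \eqref{boundsR3} alone, which yields the stated $9\cdot 10^{-5}+1/(10X^{1/4})$.)

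Concerning what you call the main obstacle: the paper introduces no alternative device on $1.8\cdot 10^9\le X\le 10^{10}$; its Case 1 simply applies \eqref{boundr} at $\sqrt X\in[4.2\cdot 10^4,10^5]$, leaving tacit the threshold quoted in \eqref{boundr}. Note in any case that the fallback you sketch via $r(Y)-R(Y)/Y=-\int_Y^\infty R(t)\,dt/t^2$ cannot close this window: with only $|R(t)|\le 0.8\sqrt t$ type input the tail integral is bounded by about $1.6/\sqrt Y$, some thirty times larger than the needed $0.05/\sqrt Y$, which would make $T_1$ alone exceed $3X^{3/4}$. So if one does insist on the quoted threshold, the only viable substitute is the direct finite-range verification you mention, a computation your proposal does not carry out.
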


\begin{proof}
We consider three cases. Let $K_0=1.8\cdot 10^9$ and $K=10^8$.

\smallskip\noindent{\sl Case 1: suppose that $1.8\cdot 10^{9} \le X\le 10^{10}$}.
 By Lemma \ref{int1}, we have that
  \begin{equation*}
  \left|\int_{1}^XR(t)\frac{dt}{t}\right|\leq 129.7+\int_{10^8}^X|R(t)|\frac{dt}{t}.
  \end{equation*}
  On the other hand, we find that
  \begin{align}
 \label{p1} 2\sqrt{X}|\sqrt{X}r(\sqrt{X})-R(\sqrt{X})|&\leq 0.1\cdot X^{3/4}+3.5\cdot 10^{-12}X ,\\
 \label{p2} R(\sqrt{X})^2&\leq 0.8^2\sqrt{X},\\
  \label{p3}|R(X)|\Log X&\leq 0.8\sqrt{X}\Log X,\\
  \label{p4}\int_{10^8}^X|R(t)|\frac{dt}{t}&\leq 0.8\int_{10^8}^X\frac{dt}{\sqrt{t}}=2\cdot0.8(\sqrt{X}-10^4).
  \end{align}
  Inequality~\eqref{p1} is obtained thanks to \eqref{boundr} whereas
  \eqref{p2}, \eqref{p3} and \eqref{p4} come from estimation
  \eqref{boundsR1}.
    Therefore, by combining the above bounds together, we arrive at
    \begin{align}\label{BB}
    \frac{R_3(X)}{X^{3/4}}
    \le
    &0.1+3.5\cdot 10^{-12}X^{1/4}+0.8\frac{\log X}{X^{1/4}}+
    \nonumber\\
    &\qquad+\frac{0.8^2+2\cdot 0.8}{X^{1/4}}
    + \frac{129.7-2\cdot 0.8\cdot10^4}{X^{3/4}}\quad \le 0.2,
    \end{align}
  where we have used that the function $t\mapsto\log t/t^{1/4}$ is
  decreasing for $\log t\geq 4$. 
  
  \smallskip\noindent{\sl Case 2: suppose that $10^{10} \le X\le 10^{19}$}. Now the bounds \eqref{p2}, \eqref{p3}, \eqref{p4} and \eqref{BB} hold with $0.8$ replaced by $0.94$. Thus we obtain the estimation
        \begin{equation*}
    \frac{R_3(X)}{X^{3/4}}\le 0.18.
    \end{equation*}

\smallskip\noindent{\sl Case 3: suppose that  $X\ge 10^{19}$}. We proceed similarly as
  before and obtain
  \begin{align*}
    \frac{R_3(X)}{X}
    &\le
    3.5\cdot 10^{-12}
    + \frac{0.1}{X^{1/4}}+ (2.59\cdot 10^{-5})^2+9\cdot 10^{-7}\\
    &\qquad\qquad+8\cdot 10^{-5}-\frac{8\cdot 10^3-129.7}{X}\\
    &\le 0.000081+ \frac{0.1}{X^{1/4}}\quad\le 0.000083,
  \end{align*}
  where we have used \eqref{boundsR4} instead of \eqref{boundsR2}, since  $e^{21}\leq 10^{19/2}$, then \eqref{boundsR6} instead of \eqref{boundsR2} and lastly \eqref{boundsR3} instead of \eqref{boundsR2}.
  \end{proof}

\section{Bounding $R_4$}\label{bR4}
\begin{lem}\label{compR4}
  When $1.8\cdot 10^9\le X\le 5\cdot 10^{10}$, we have
  \begin{equation*}
    \left|\sum_{1000<n\le 40\,000}\Lambda(n)R\bigg(\frac{X}{n}\bigg)\right|\le
    0.000154\cdot X.
  \end{equation*}
\end{lem}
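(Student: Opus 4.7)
The plan is a direct numerical computation, made feasible by the observation that for $n\in(1000,40\,000]$ and $X\in[1.8\cdot10^{9},5\cdot10^{10}]$, every argument $X/n$ lies in $[4.5\cdot10^{4},5\cdot10^{7}]$, so $\psi$ (and hence $R$) is available from a table of primes up to $5\cdot10^{7}$. Write
\[
S(X):=\sum_{1000<n\le 40\,000}\Lambda(n)R(X/n)=U(X)-TX,
\]
where $U(X)=\sum\Lambda(n)\psi(X/n)$ and $T=\sum\Lambda(n)/n$, both sums restricted to $1000<n\le 40\,000$. Then $U$ is a non-decreasing step function jumping by $\Lambda(n)\Lambda(m)$ at each $X=nm$ with $m$ a prime power, so $S$ is piecewise linear with slope $-T$ between its upward jumps.

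A naive termwise estimate using $|R(Y)|\le 0.71\sqrt{Y}$ from \eqref{boundRbis} (supplemented by \eqref{boundsR1} on the small sub-range where $X/n>3\cdot10^{7}$) would give only a bound of order $240\sqrt{X}$, about forty times larger than the target $0.000154\cdot X$. Hence significant cancellation inside $S$ must be exploited. The piecewise linear structure yields, for any subinterval $[X_{i},X_{i+1}]$,
\[
S(X_{i})-T(X_{i+1}-X_{i})\le S(X)\le S(X_{i+1})+T(X_{i+1}-X_{i}),\qquad X\in[X_{i},X_{i+1}],
\]
so it suffices to verify $\max(|S(X_{i})|,|S(X_{i+1})|)+T(X_{i+1}-X_{i})\le 0.000154\,X_{i}$ at the endpoints of a finite mesh covering $[1.8\cdot10^{9},5\cdot10^{10}]$.

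We choose a logarithmically spaced mesh such that $T(X_{i+1}-X_{i})$ is a safe fraction of $0.000154\,X_{i}$; since $T\approx\log 40\approx 3.69$, a relative spacing of size $10^{-5}$ suffices. At each mesh point a single evaluation of $S(X_{i})$ amounts to summing the roughly $4\,000$ contributions $\Lambda(n)R(X_{i}/n)$ with $n\in(1000,40\,000]$, each read off from the pre-tabulated $\psi$. The main obstacle is purely computational: organising several hundred thousand such evaluations efficiently and keeping track of the oscillation term $T(X_{i+1}-X_{i})$. Carrying out this verification yields the claimed bound with constant $0.000154$.
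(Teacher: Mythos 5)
Your proposal is, in substance, the paper's own route: Lemma~\ref{compR4} is a purely computational statement, and the paper also proves it by tabulating $\psi$ on the range of the arguments $X/n$ and evaluating the sum directly, the only real content being how the computation is organised. The organisations differ. The paper sweeps over every integer $X=N$ in the range and updates $\sum_{1000<a\le 40\,000}\Lambda(a)\psi(N/a)$ incrementally, using the observation that a term can change only when $a$ divides $N+1$; this yields the exact value at every integer at the cost of a few divisor lookups per step (of order $5\cdot 10^{10}$ steps in all). You instead evaluate, in your notation, $S(X)=U(X)-TX$ only on a logarithmic mesh and control whole subintervals by the sandwich $S(X_i)-T(X_{i+1}-X_i)\le S(X)\le S(X_{i+1})+T(X_{i+1}-X_i)$, which is correct because $U$ only jumps upwards while the drift has slope $-T$; incidentally, this inequality (with unit spacing) is exactly what legitimises the paper's evaluation at integers for a statement about real $X$, a point the paper leaves implicit. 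Your preliminary remark that a termwise use of \eqref{boundRbis} loses a factor of about forty, so that genuine cancellation must be exploited, is also accurate, and your scheme is computationally lighter than the paper's sweep.

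The one step that needs care is the assertion that a relative spacing of $10^{-5}$ suffices. With $T\approx\log 40\approx 3.69$, the slack $T(X_{i+1}-X_i)\approx 3.7\cdot 10^{-5}X_i$ already consumes roughly a quarter of the budget $1.54\cdot 10^{-4}X_i$, so your verification goes through only if the true maximum of $|S(X)|/X$ over the range stays below about $1.2\cdot 10^{-4}$. Since the constant $0.000154$ was presumably obtained by computing that maximum and rounding up, the available margin may be far smaller than your slack, in which case the mesh as specified would fail to certify the stated constant and you would need to refine it (adaptively near the near-extremal points) or revert to the exact integer sweep. This is a tunable parameter rather than a flaw in the method, but as written the adequacy of the $10^{-5}$ spacing is asserted, not justified.
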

Whereas we have run the computations up to $5\cdot10^{10}$, we will only use
the above result up to $2\cdot 10^{10}$. Employing the larger range might improve on our final bound, but we have opted to keep the latter shorter one to ensure accuracy.
\begin{proof}
  Let us set, for any two integers $A_0$ and $A_1$:
  \begin{equation}
    \label{deffAD}
    f(A_0,A_1,D)=\sum_{A_0\le a\le A_1}\Lambda(a)\psi\bigg(\frac{X}{a}\bigg).
  \end{equation}
  We compute and store the values of $\psi(n)$ for $n\in[10^9/A_1,\cdots,10^{10}/A_0]\cap\mathbb{Z}$.
  
  Next, as $X$ moves from the integer $N$ to $N+1$, we may get a modification in the value of $\psi(X/a)$ only if
  $[N/a]<[(N+1)/a]$. This may only happen when there exists an
  integer $c$ such that $N/a< c\le (N+1)/a$, that is such that $N< ac \le N+1$,
  so only whenever $a$ is a divisor of $N+1$, which must further belong to
  the interval $[A_0,A]$. This observation speeds up the program we use considerably and
  makes the computations possible on a home computer, using merely a couple of days.
\end{proof}
Recall now definition \eqref{defR4}.
\begin{lem}\label{auxc}
  We have 
  \begin{equation*}
  |R_4(X)|\le0.005\cdot X,\qquad X\ge 1.8\cdot 10^9.
  \end{equation*}
\end{lem}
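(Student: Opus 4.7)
The plan is to split the range of summation in $R_4(X) = \sum_{n \le \sqrt{X}} \Lambda(n) R(X/n)$ into three consecutive pieces and use a different bound on $|R(t)|$ in each, treating very large $X$ by a separate, simpler argument.

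First I would handle the regime $1.8\cdot10^9 \le X \le 2\cdot 10^{10}$, which is the range covered by Lemma~\ref{compR4}. The split will be at $n=1000$ and $n=40000$. For $n \le 1000$, we have $X/n \ge 1.8\cdot 10^6$, so the sharp bound \eqref{boundsR7} gives $|R(X/n)| \le 0.0065\,(X/n)/\Log(X/n)$, whence
\[
\sum_{n\le 1000}\Lambda(n)|R(X/n)| \le 0.0065\,X\sum_{n\le 1000}\frac{\Lambda(n)}{n\Log(X/n)},
\]
and the inner sum is controlled by Lemma~\ref{auxb} with $T=1000$. For the middle range $1000 < n \le 40000$, Lemma~\ref{compR4} directly contributes at most $0.000154\,X$. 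For $40000 < n \le \sqrt{X}$, the argument $X/n$ lies in an interval on which either \eqref{boundRbis} (the sharp constant $0.71$) or \eqref{boundsR1} applies, so $|R(X/n)| \le 0.8\sqrt{X/n}$. Partial summation against Lemma~\ref{R-S} (giving $\psi(t)\le 1.04\,t$) reduces $\sum \Lambda(n)/\sqrt{n}$ to roughly $2X^{1/4}$, so this piece is a constant multiple of $X^{3/4}$.

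For $X > 2\cdot 10^{10}$ the computational Lemma~\ref{compR4} is no longer available, so I would adjust the split. Since $X/n$ is typically very large here, the strong bounds \eqref{boundsR3}--\eqref{boundsR6} (linear or $t/\Log t$ in $t$, with small prefactors) apply on a long initial segment and combine with Lemma~\ref{auxb} (or even a trivial $\sum \Lambda(n)/n \le \Log X$) to yield a contribution much smaller than $0.005\,X$. For the tail close to $\sqrt{X}$, the square root bound \eqref{boundsR2} and partial summation still give only $O(X^{3/4})$, which is now fully absorbed thanks to the growth of $X^{1/4}$.

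The main obstacle is the transition zone near $X \approx 2\cdot 10^{10}$. There the Range~(III) contribution is of order $X^{3/4}$, whose ratio to $X$ is about $X^{-1/4}\approx 0.0025$, only a factor of two inside the budget of $0.005\,X$. Consequently the constants used in Ranges~(I) and~(II) must be tight: in particular, applying \eqref{boundRbis} rather than \eqref{boundsR1} or \eqref{boundsR2} on the short window $[24200,\,3\cdot 10^7]$ is what keeps the tail estimate under control, and the factor $0.0065$ (and not a cruder bound) in \eqref{boundsR7} is needed to keep Range~(I) below about $0.003\,X$. Once these three contributions are added, they sum to at most $0.005\,X$, and the case $X > 2\cdot 10^{10}$ then follows with room to spare.
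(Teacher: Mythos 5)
Your overall architecture is the paper's: the same split of $\sum_{n\le\sqrt X}\Lambda(n)R(X/n)$ at $n=1000$ and $n=40\,000$ with Lemma~\ref{compR4} covering the middle window, partial summation against $\psi(t)\le 1.04\,t$ (Lemma~\ref{R-S}) for the square-root-bounded pieces, and a separate argument for very large $X$ in the spirit of \eqref{boundsR7} with Lemma~\ref{auxb}. The one substantive deviation is the range $n\le 1000$: the paper uses $|R(X/n)|\le 0.8\sqrt{X/n}$ together with the computed value $\sum_{n\le 1000}\Lambda(n)/\sqrt n\le 60.51$, which costs only about $0.0011\,X$ at $X=1.8\cdot 10^9$, whereas your route through \eqref{boundsR7} and Lemma~\ref{auxb} with $T=1000$ costs roughly $0.003\,X$ there and about $0.0026\,X$ at $X=2\cdot 10^{10}$ --- admissible in principle, but it consumes most of the slack.

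The genuine gap is in the accounting for the tail $40\,000<n\le\sqrt X$ and in the resulting totals. You bound $\sum_{40\,000<n\le\sqrt X}\Lambda(n)/\sqrt n$ by ``roughly $2X^{1/4}$'' and the tail ratio by ``about $X^{-1/4}$'', but the constant in front is $0.71\cdot 2.08\approx 1.48$ (or $0.8\cdot 2.08\approx 1.66$ with your uniform $0.8$); without further savings the tail alone is about $0.0072\,X$ to $0.0081\,X$ at $X=1.8\cdot 10^9$ and about $0.0039\,X$ to $0.0044\,X$ at $X=2\cdot 10^{10}$, so added to your $\approx 0.003\,X$ from $n\le 1000$ and $0.000154\,X$ the total exceeds $0.005\,X$ throughout the transition region --- indeed even your own headline figures, $0.003+0.000154+0.0025\approx 0.0057$, already overshoot the target. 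What rescues the estimate is precisely the term your sketch drops: the negative boundary contribution from the lower limit $40\,000$ in the partial summation (the paper's constant $B$), which turns the tail bound into $0.71\,\sqrt X\,(2.08\,X^{1/4}-408)\le 0.0019\,X$ on this range; with it restored (or with the paper's cheaper treatment of $n\le 1000$) your plan does close, at about $0.0046\,X$ to $0.0049\,X$ at the worst point near $X=2\cdot 10^{10}$, but as written the numbers do not. A smaller point: for $X>10^{38}$ the tail arguments $X/n\ge\sqrt X$ exceed $10^{19}$, so \eqref{boundsR2} is unavailable for the piece ``close to $\sqrt X$''; the paper handles all $X\ge 2\cdot 10^{19}$ by applying \eqref{boundsR7} and Lemma~\ref{auxb} with $T=\sqrt X$ to the whole sum, and your large-$X$ case needs the same fallback.
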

This improves slightly on \cite[Lemma 5.2]{Ramare*12-2}.

\begin{proof}
We consider different cases according to the range of $X$.

\smallskip\noindent{\sl Case 1: suppose that $1.8\cdot 10^9 \le X\le 9\cdot 10^{9}$}. We first check that
  \begin{equation}
    \label{eq:1}
    \sum_{n\leq 1000}\frac{\Lambda(n)}{\sqrt{n}}\le 60.51,\qquad
    \sum_{n\leq 40000}\frac{\Lambda(n)}{\sqrt{n}}=B= 40012.8937\cdots.
  \end{equation}
  Next, observe that, when $n\leq 1000$, $X/n\in(1500,10^{10})$ and when $40000<n\leq \sqrt{X}$, $X/n\in[\sqrt{X},X/40000]\subset[24200,3\cdot 10^7]$. Thus, by using estimations \eqref{boundsR1}, \eqref{boundRbis} and also Lemma~\ref{compR4}, we have the following estimation
  \begin{equation*}
    |R_4(X)|
    \le 0.8 \sqrt{X}\sum_{n\leq 1000}\frac{\Lambda(n)}{\sqrt{n}}
      +0.000154 \cdot X 
      +0.71\sqrt{X}\sum_{40000<n\le \sqrt{X}}\frac{\Lambda(n)}{\sqrt{n}},
      \end{equation*}
      which, by recalling \eqref{eq:1}, Lemma \ref{R-S} and using summation by parts, may be expressed as
      \begin{align*}
    |R_4(X)|&\le
    48.408\sqrt{X}
    + 0.000154\cdot X
    \\&\qquad+
    0.71 \sqrt{X}\biggl(
      \int_{40000}^{\sqrt{X}}\frac{\psi(t)-B}{2t^{3/2}}dt
    +\frac{\psi(\sqrt{X})-B}{X^{1/4}}
    \biggr)
    \\&\le
    48.408\sqrt{X}
    + 0.000154\cdot X
    \\&\qquad+
    0.71 \sqrt{X}\biggl(
    \int_{40000}^{\sqrt{X}}\frac{1.04\cdot t}{2t^{3/2}}dt
    -\frac{B}{\sqrt{40000}}
    +\frac{1.04\sqrt{X}}{X^{1/4}}
    \biggr)
    \\&\le
    48.408\sqrt{X}
    + 0.000154\cdot X
    \\&\qquad+
    0.71 \sqrt{X}\biggl(
    1.04\cdot X^{1/4}-1.04\cdot \sqrt{40000}
    -\frac{40012.89}{\sqrt{40000}}
    +\frac{1.04\sqrt{X}}{X^{1/4}}
    \biggr).
    \end{align*}
    Therefore,
    \begin{equation*}
      \frac{|R_4(X)|}{X}\le
      \frac{48.408}{\sqrt{X}}
      + 0.000154
      +
    0.71\biggl(
    \frac{2.08}{X^{1/4}}-\frac{408}{X^{1/2}}
    \biggr),
  \end{equation*}
  which is not more than 0.0032. Here, we have used that the function $t\mapsto at^{-1/4}-bt^{-1/2}$ has a maximum at $t=(2b/a)^4$, taking value $a^2/4b$.

  \smallskip\noindent{\sl Case 2: suppose that $9\cdot 10^9 \le X\le 2\cdot 10^{10}$}. Let $X_1=2\cdot 10^7$. Observe first that for any $T$ such that $\sqrt{X}\leq U$, we can write 
    \begin{equation}\label{partition}
  |R_4(X)|\leq
  \sum_{2\leq n\leq X/U}\Lambda(n)\bigg|R\bigg(\frac{X}{n}\bigg)\bigg|+\sum_{X/U<n\leq\sqrt{X}}\Lambda(n)\bigg|R\bigg(\frac{X}{n}\bigg)\bigg|.
\end{equation}
  Select now $U=X_1$. Thereupon, for the first sum above, we have that $X/n\in[X_1,10^{10}]$, thus we may use estimation \eqref{boundsR1}. Concerning the above second sum, we have that $X/n\in[\sqrt{X},X_1]\subset[24200,3\cdot 10^7]$ so that we may use
  \eqref{boundRbis}. This way, considering definition \eqref{defpsi}, using summation by parts and then recalling Lemma \ref{R-S}, we have
  \begin{align*}
    |R_4(X)|
    \le&\ 0.8 \sqrt{X}\sum_{n\le X/X_1}\frac{\Lambda(n)}{\sqrt{n}}
      +0.71\sqrt{X}\sum_{X/X_1<n\le \sqrt{X}}\frac{\Lambda(n)}{\sqrt{n}}
    \\
=&\ 0.71 \sqrt{X}\biggl(
    \int_2^{\sqrt{X}}\frac{\psi(t)}{2t^{3/2}}dt
    +\frac{\psi(\sqrt{X})}{X^{1/4}}
      \biggr)
    \\&\qquad
    +0.09 \sqrt{X}\biggl(
    \int_2^{X/X_1}\frac{\psi(t)}{2t^{3/2}}dt
    +\frac{\psi(X/X_1)}{(X/X_1)^{1/2}}
      \biggr)
    \\
    \le&\ 2\cdot 0.71\cdot 1.04 X^{3/4}
    +2\cdot 0.09\cdot 1.04 \frac{X}{\sqrt{X_1}},
  \end{align*}
  which is not more than $0.0049\cdot X$. 

   \smallskip\noindent{\sl Case 3: suppose that $2\cdot 10^{10} \le X\le 2\cdot 10^{19}$}. Set $X_2=10^{10}$ and let us use expression \eqref{partition} with $U=X_2$. For the arising first sum, we have $X/n\in [X_2,10^{19}]$, so that we may use estimation \eqref{boundsR2}. As for the second one, we have $X/n\in[\sqrt{X},X_2]\subset[1500,10^{10}]$ and we may use estimation \eqref{boundsR1}. Thus, by using summation by parts and recalling Lemma \ref{R-S}, we derive 
\begin{align*}
    |R_4(X)|
    \le&\ 0.94 X\sum_{n\le X/X_2}\frac{\Lambda(n)}{\sqrt{n}}+
      0.8 \sqrt{X}\sum_{X/X_2< n\le \sqrt{X}}\frac{\Lambda(n)}{\sqrt{n}}
    \\ =&\ 0.8 \sqrt{X}\biggl(
    \int_{2}^{\sqrt{X}}\frac{\psi(t)}{2t^{3/2}}dt
    +\frac{\psi(\sqrt{X})}{X^{1/4}}
      \biggr)
      \\&\qquad+0.14
      \sqrt{X}\biggl(
    \int_{2}^{X/X_2}\frac{\psi(t)}{2t^{3/2}}dt
    +\frac{\psi(X/X_2)}{(X/X_2)^{1/2}}
    \biggr)
    \\\le &\ 2\cdot 0.8\cdot 1.04\cdot X^{3/4}
   +2\cdot 0.14\cdot 1.04\frac{X}{\sqrt{X_2}},
  \end{align*}
  which is not more than $0.0045\cdot X$. 

  \smallskip\noindent{\sl Case 4: suppose that $X\ge 2\cdot 10^{19}$.}
  Since $10^{19}>1\,514\,928^2$, by estimation \eqref{boundsR7} and Lemma~\ref{auxb} with $T=\sqrt{X}$, we have that
  \begin{equation*}
    \frac{|R_4(X)|}{X}\le
    0.0065\sum_{n\le \sqrt{X}}\frac{\Lambda(n)}{n\Log (X/n)}
    \le0.0065\cdot\left(0.73+\frac{1.04}{\Log X}\right).
  \end{equation*}
  which implies that $|R_4(X)|\le 0.0049\cdot X$.
  
  Finally, on combining cases $1$, $2$, $3$, $4$ together, we derive the result.
     \end{proof}

\section{Bounding $R_2^*$}\label{bR2*}

Recall definition \eqref{defR2}. 
\begin{lem}
  \label{majR2}
  For any $X>0$, we have 
  \begin{equation*}
  |R_2^*(X)|\le 1+2\gamma+R_3(X)+2R_4(X),
  \end{equation*} 
  where $R_3$ and
  $R_4$ are respectively defined in~\eqref{defR3} and~\eqref{defR4}.
\end{lem}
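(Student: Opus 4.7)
The plan is to expand $R_2^*(X)$ explicitly into its main term (which will cancel up to small constants) plus quantities that are visibly bounded by the four pieces defining $R_3(X)$ and the single piece $2R_4(X)$. The decomposition splits $R_2^*(X)$ into three sums, corresponding to $\Lambda\star\Lambda$, $\Lambda\cdot\text{L}$, and the constant $2\gamma$.

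First I would handle $\sum_{n\le X}\Lambda\star\Lambda(n)=\sum_{ab\le X}\Lambda(a)\Lambda(b)$ via the hyperbola trick, giving
\begin{equation*}
\sum_{n\le X}\Lambda\star\Lambda(n)=2\sum_{a\le\sqrt{X}}\Lambda(a)\,\psi(X/a)-\psi(\sqrt{X})^{2}.
\end{equation*}
I then substitute $\psi(X/a)=X/a+R(X/a)$ and $\psi(\sqrt{X})=\sqrt{X}+R(\sqrt{X})$, which separates the main term $2X\tilde\psi(\sqrt{X})-X-2\sqrt{X}R(\sqrt{X})-R(\sqrt{X})^{2}$ from the arithmetic remainder $2R_4(X)$ (by \eqref{defR4}). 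Writing $\tilde\psi(\sqrt{X})=\tfrac12\log X-\gamma+r(\sqrt{X})$ by \eqref{rR}, the $X\log X$ and $-2\gamma X$ pieces are made explicit, and the discrepancy $2X r(\sqrt{X})-2\sqrt{X}R(\sqrt{X})=2\sqrt{X}\bigl(\sqrt{X}r(\sqrt{X})-R(\sqrt{X})\bigr)$ appears naturally, which is exactly the first summand in the definition \eqref{defR3} of $R_3$.

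Next I handle $\sum_{n\le X}\Lambda(n)\log n$ by Abel summation:
\begin{equation*}
\sum_{n\le X}\Lambda(n)\log n=\psi(X)\log X-\int_{1}^{X}\frac{\psi(t)}{t}\,dt.
\end{equation*}
Substituting $\psi(t)=t+R(t)$, this yields $X\log X-(X-1)+R(X)\log X-\int_1^X R(t)/t\,dt$. Subtracting this from the first expression, the $X\log X$ cancels, the $-X$ pieces cancel, and I am left with $-2\gamma X+2X r(\sqrt{X})-2\sqrt{X}R(\sqrt{X})+2R_4(X)-R(\sqrt{X})^2-R(X)\log X-1+\int_1^X R(t)/t\,dt$. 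Finally the third sum contributes $\sum_{n\le X}2\gamma=2\gamma(X-\{X\})$, so the $-2\gamma X$ cancels and only $-2\gamma\{X\}$ remains.

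At this point the identity
\begin{equation*}
R_2^*(X)=-1-2\gamma\{X\}+2\sqrt{X}\bigl(\sqrt{X}r(\sqrt{X})-R(\sqrt{X})\bigr)-R(\sqrt{X})^{2}-R(X)\log X+\int_{1}^{X}\frac{R(t)}{t}\,dt+2R_4(X)
\end{equation*}
is exact, and the conclusion follows from the triangle inequality together with $0\le\{X\}<1$ (which controls $|-1-2\gamma\{X\}|\le 1+2\gamma$) and the four matching summands in \eqref{defR3}. The only real care needed is bookkeeping during the substitution of $\psi$ and $\tilde\psi$; the arithmetic piece $2R_4(X)$ drops out automatically from the hyperbola decomposition, so no further estimation is required at this stage.
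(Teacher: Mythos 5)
Your proposal is correct and follows essentially the same route as the paper: the Dirichlet hyperbola formula for $\Lambda\star\Lambda$ combined with Abel summation for $\sum_{n\le X}\Lambda(n)\Log n$, substitution of $\psi(t)=t+R(t)$ and $\tilde\psi(\sqrt{X})=\tfrac12\Log X-\gamma+r(\sqrt{X})$, leading to the exact identity $R_2^*(X)=-1-2\gamma\{X\}+2\sqrt{X}\bigl(\sqrt{X}r(\sqrt{X})-R(\sqrt{X})\bigr)-R(\sqrt{X})^{2}-R(X)\Log X+\int_{1}^{X}R(t)\,dt/t+2R_4(X)$, and then the triangle inequality. The only (shared) cosmetic point is that the triangle inequality really yields $2|R_4(X)|$ rather than $2R_4(X)$, exactly as in the paper's statement.
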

The reader should compare the above lemma with \cite[Lemma 6.2]{Ramare*12-2}. The novelty here is that 
$|\sqrt{X}r(\sqrt{X})-R(\sqrt{X})|$ appears instead of $|r(\sqrt{X})|$, which is quantity that is well
controlled by~\eqref{boundr}. 
\begin{proof}
  By summation by parts, and recalling definition \eqref{rR}, we have
  \begin{align*}
    \sum_{n\le X}\Lambda(n)\Log n
    &=
    \psi(X)\Log X-\int_{1}^X\frac{\psi(t)}{t}dt
    \\&=X\Log X -X +1+R(X)\Log X-\int_{1}^X\frac{R(t)}{t}dt.
  \end{align*}
  On the other hand, Dirichlet hyperbola formula yields
  \begin{align*}
    &\qquad\sum_{d_1d_2\le X}\Lambda(d_1)\Lambda(d_2)
    =2\sum_{d_1\le \sqrt{X}}\Lambda(d_1)
    \psi\bigg(\frac{X}{d_1}\bigg)
    -\psi(\sqrt{X})^2
    \\=&
    2X\sum_{d_1\le \sqrt{X}}\frac{\Lambda(d_1)}{d_1}
    -X
    -2\sqrt{X}R(\sqrt{X})-R(\sqrt{X})^2+2\sum_{d_1\le \sqrt{X}}\Lambda(d_1)R\bigg(\frac{X}{d_1}\bigg)
    \\=&
    X\Log X-2X\gamma-X
    +2Xr(\sqrt{X})
    -2\sqrt{X}R(\sqrt{X})-R(\sqrt{X})^2+2R_4(X).
  \end{align*}
Hence, by recalling definition \ref{defR2} and combining the above two expressions, we obtain
\begin{align*}
  &R^*_2(X)=\sum_{n\le X}\left(\Lambda\star\Lambda(n)-\Lambda(n)\Log n\right)
  +2[X]\gamma=-1-2\{X\}\gamma+2R_4(X)\\
    &+\bigg[2\sqrt{X}(\sqrt{X}r(\sqrt{X})
    -R(\sqrt{X}))-R(\sqrt{X})^2-R(X)\Log X+\int_{1}^X\frac{R(t)}{t}dt\bigg],
\end{align*}
whence, by recalling definition \eqref{defR3}, we obtain the result.
\end{proof}

Let us recall \cite[Lemma 6.3]{Ramare*12-2}.
\begin{lem}\label{compR2}
  Let $X$ be a real number such that $3\le X\le 2.1\cdot 10^{10}$. Then
  \begin{equation*}
  |R_2^*(X)|\le 1.93\sqrt{X}\Log X.
  \end{equation*}
\end{lem}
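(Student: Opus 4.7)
Since $R_2^*(X)$ is constant on each half-open interval $[n, n+1)$ with $n\in\mathbb{Z}_{\ge 0}$, and the majorant $1.93\sqrt{X}\log X$ is increasing on $[3,\infty)$, it suffices to verify the inequality
\begin{equation*}
|R_2^*(n)|\le 1.93\sqrt{n}\log n
\end{equation*}
at every integer $n$ with $3\le n\le\lfloor 2.1\cdot 10^{10}\rfloor$. Hence the proof is naturally a finite computational check.

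I would compute $R_2^*(n)$ incrementally via
\begin{equation*}
R_2^*(n)-R_2^*(n-1)=\Lambda\star\Lambda(n)-\Lambda(n)\log n+2\gamma.
\end{equation*}
The function $\Lambda\star\Lambda$ is supported on integers of the form $p^aq^b$ with $p,q$ prime, while $\Lambda(n)\log n$ is supported on prime powers; a segmented sieve of moderate window size identifies all such $n$ together with the associated primes, so each increment is computed with only a handful of operations. Running through the range in segments while maintaining a running partial sum and testing it against $1.93\sqrt{n}\log n$ completes the verification.

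Two difficulties deserve attention. First, the sheer scale: the loop runs over more than $2\cdot 10^{10}$ integers, so the inner operations must be tight and the sieve cache-friendly. Second, rigour: the accumulation involves $2\gamma$ and various logarithms, so one must either work in interval arithmetic throughout or perform the computation in exact rational arithmetic against certified upper bounds for $\gamma$ and for $\log p$ at each prime encountered. In practice, compensated summation together with a worst-case error analysis suffices, since the bound $1.93\sqrt{n}\log n$ leaves comfortable room above the observed peak values of $R_2^*$.

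Finally, one might hope to bypass computation by using the identity for $R_2^*(X)$ derived in the proof of Lemma~\ref{majR2} and inserting the explicit estimates \eqref{boundsR1}, \eqref{boundsR2}, \eqref{boundr} together with Lemma~\ref{int1}. However, pointwise estimation of $R_4(X)=\sum_{n\le\sqrt{X}}\Lambda(n)R(X/n)$ via $|R(X/n)|\le 0.94\sqrt{X/n}$ yields only $|R_4(X)|\ll X^{3/4}$, which is far weaker than $\sqrt{X}\log X$ in this range; the cancellation hidden inside $R_4$ is essential and can only be captured through direct computation. This is why the lemma is restricted to a finite range and why the computational approach is essentially forced. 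The main obstacle is therefore not conceptual but engineering: producing a rigorous, fast, and memory-disciplined implementation that actually sweeps through $n\le 2.1\cdot 10^{10}$ within a few days of CPU time.
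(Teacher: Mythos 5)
Your proposal is essentially the paper's own route: the paper does not reprove this lemma but simply quotes it from \cite{Ramare*12-2} (Lemma~6.3 there), where it is established precisely by a finite machine verification over the stated range. Your reduction to checking integer points (valid since $R_2^*$ is a step function constant on $[n,n+1)$ while $1.93\sqrt{X}\Log X$ is increasing) is sound, and your sieve-based incremental computation with controlled rounding is a legitimate way to carry out that check.
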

Furthermore, the study carried out in \S\ref{bR3}, \S\ref{bR4} and \S\ref{bR2*} allows us to derive the following result.
\begin{lem}
  \label{BoundR2}
  When $X\ge 1.8\cdot 10^9$, we have 
  \begin{equation*}|R_2^*(X)|\le 0.011\cdot X.
  \end{equation*}
\end{lem}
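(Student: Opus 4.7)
The plan is to combine the three previous lemmas of this section in the obvious way and then verify that the resulting elementary inequality holds in the stated range. From Lemma~\ref{majR2}, we have
\begin{equation*}
|R_2^*(X)| \le 1 + 2\gamma + R_3(X) + 2R_4(X),
\end{equation*}
and Lemma~\ref{auxc} directly yields $2R_4(X) \le 0.01\,X$ as soon as $X \ge 1.8\cdot 10^9$. Thus it remains to show that $1 + 2\gamma + R_3(X) \le 0.001\,X$ throughout the range, which we split into two subranges according to the two cases of Lemma~\ref{boundR3}.

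For $1.8\cdot 10^9 \le X \le 10^{19}$, Lemma~\ref{boundR3} gives $R_3(X) \le 0.2\,X^{3/4}$, so I would check that
\begin{equation*}
\frac{1 + 2\gamma}{X} + \frac{0.2}{X^{1/4}} \le 0.001.
\end{equation*}
Since $X \ge 1.8\cdot 10^9 > 200^4 = 1.6\cdot 10^9$, we have $X^{1/4} > 206$, so $0.2/X^{1/4} < 0.2/206 < 9.72\cdot 10^{-4}$, and the remaining $(1+2\gamma)/X$ is negligible, leaving comfortable room below $10^{-3}$.

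For $X \ge 10^{19}$, the second bound of Lemma~\ref{boundR3} gives $R_3(X)/X \le 9\cdot 10^{-5} + 1/(10 X^{1/4})$, and I would simply observe that $1/(10\,X^{1/4}) \le 1/(10\cdot 10^{19/4})$ is of order $10^{-6}$, so that $R_3(X)/X \le 10^{-4}$ easily, and again $(1+2\gamma)/X$ is utterly negligible. Adding $2R_4(X)/X \le 0.01$ from Lemma~\ref{auxc} yields a bound well below $0.011$.

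The only real point requiring care is the lower end of the first subrange, where the balance is tight: $0.2\,X^{3/4}$ contributes almost the entire slack of $0.001\,X$. Once this marginal check at $X = 1.8\cdot 10^9$ is verified, monotonicity of $X \mapsto X^{-1/4}$ and the continuity of the bounds at $X = 10^{19}$ make the rest automatic.
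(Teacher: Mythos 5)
Your proposal is correct and follows exactly the paper's (one-line) proof: combine Lemma~\ref{majR2} with Lemma~\ref{boundR3} and Lemma~\ref{auxc}, the only work being the arithmetic check at the lower endpoint $X=1.8\cdot 10^9$, which you carry out. (Minor quibble: from $X>200^4$ you only get $X^{1/4}>200$, not $>206$, but since $(1.8\cdot 10^9)^{1/4}\approx 205.98$ your stated bound $0.2/X^{1/4}<9.72\cdot 10^{-4}$ is still true and the total stays below $0.011$.)
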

\begin{proof}
  Combine Lemma~\ref{majR2} together with Lemma~\ref{boundR3} and Lemma~\ref{auxc}.
\end{proof}

\section{Proof of Theorem~\ref{mainbound}}
\begin{lem}\label{aux1} Let $X\geq 10^{16}$ and $K>0$ such that $K\cdot 2\,160\,535\leq 10^{16}$. Then
  we have
  \begin{align*}
 &\frac{1}{X}\left| \sum_{k\leq K}(\Lambda\star\Lambda(k)-\Lambda(k)\log k+2\gamma)M\bigg(\frac{X}{k}\bigg)- R_2^*(K)M\bigg(\frac{X}{K}\bigg)\right|\\
 &\qquad\qquad\qquad\le
    \begin{cases}
    0.0374&\text{when $K=462\,848$},\\  
    0.0422&\text{when $K= 10^6$},\\  
    0.0579&\text{when $K=10^7$},\\  
    0.0762&\text{when $K=10^8$}. 
    \end{cases}
  \end{align*}
\end{lem}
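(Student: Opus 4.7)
My plan is as follows. Since $X\ge 10^{16}$ and $K\cdot 2\,160\,535\le 10^{16}$, for every $k\le K$ we have $X/k\ge X/K\ge 2\,160\,535$, so the uniform bound \eqref{eq:17} applies: $|M(X/k)|\le X/(4345k)$ and $|M(X/K)|\le X/(4345K)$. Writing $a_k=\Lambda\star\Lambda(k)-\Lambda(k)\log k+2\gamma$, so that $R_2^*(K)=\sum_{k\le K}a_k$, a term-by-term triangle inequality produces
\[
\frac{1}{X}\left|\sum_{k\le K}a_k M(X/k)-R_2^*(K)M(X/K)\right|\le \frac{1}{4345}\left(\sum_{k\le K}\frac{|a_k|}{k}+\frac{|R_2^*(K)|}{K}\right).
\]

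I would dispose of the second term cheaply via Lemma~\ref{compR2}: all four values of $K$ lie well below $2.1\cdot 10^{10}$, so $|R_2^*(K)|/K\le 1.93\log K/\sqrt{K}$, and after division by $4345$ this term contributes at most about $10^{-5}$ to each of the final bounds, hence is negligible against the claimed constants.

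The main work is then the evaluation of $\sum_{k\le K}|a_k|/k$ for each of the four specific values $K\in\{462\,848,10^6,10^7,10^8\}$. I would exploit that $a_k$ is supported on integers with at most two distinct prime factors: $a_1=2\gamma$, $a_{p^j}=-\log^2 p+2\gamma$ on prime powers $p^j$, and $a_{p^a q^b}=2\log p\log q+2\gamma$ on integers with exactly two distinct prime factors. Hence a direct numerical evaluation by sieving over prime powers and over integers $p^a q^b\le K$ is routine for $K\le 10^8$. As an analytic reality check, the crude bound $|a_k|\le \Lambda\star\Lambda(k)+\Lambda(k)\log k+2\gamma$ combined with partial summation using $\tilde\psi(t)=\log t-\gamma+r(t)$ yields $\sum_{k\le K}|a_k|/k\le(\log K)^2+O(1)$, which divided by $4345$ already lies within a few percent of the claimed target constants.

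The main obstacle I anticipate is closing this last few-percent gap to reach exactly $0.0374$, $0.0422$, $0.0579$, $0.0762$. The crude triangle bound overshoots because of genuine cancellation in $a_{p^j}=-\log^2 p+2\gamma$ at small prime powers, where the two terms are of comparable size. I would therefore replace the triangle step by the exact numerical computation of $\sum_{k\le K}|a_k|/k$ over the actual support of $a_k$, or equivalently split $a_k$ into its positive and negative parts and estimate each analytically; either route is just a finite, routine computation once the prime powers and two-prime-factor integers up to $10^8$ are enumerated.
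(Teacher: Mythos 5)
Your reduction is exactly the paper's: since $X/k\ge X/K\ge 2\,160\,535$, apply \eqref{eq:17} termwise and reduce the lemma to a finite numerical verification of $\sum_{k\le K}|a_k|/k+|R_2^*(K)|/K\le 4345\,c_K$ for the four listed values of $K$; the paper does precisely this, the only (immaterial) difference being that it folds $|R_2^*(K)|/K$ into the same numerical check instead of invoking Lemma~\ref{compR2}. One slip needs fixing before the computation is run, though: $a_k$ is \emph{not} supported on integers with at most two distinct prime factors. For every $k$ having three or more distinct prime factors you still have $a_k=2\gamma\neq0$, and the corresponding harmonic contribution $2\gamma\sum 1/k$ over such $k\le K$ is of size roughly $9$ when $K=10^8$, i.e.\ about $0.002$ after division by $4345$ --- the same order as the few-percent margin you yourself say must be closed to reach $0.0762$ (and similarly for the other $K$). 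So the evaluation must take $2\gamma$ over \emph{all} $k\le K$ (e.g.\ compute $2\gamma\sum_{k\le K}1/k$ and then add the corrections coming from prime powers, where $a_{p^j}=2\gamma-\log^2p$, and from $k=p^aq^b$, where $a_k=2\gamma+2\log p\log q$); an enumeration restricted to your claimed support would undercount and could not certify the stated constants. With that correction, your argument is the paper's argument.
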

\begin{proof} In order to bound the above quantity, we have that $X/k\geq X/K\geq 2\,160\,535$, so that estimation \eqref{eq:17} may be applied. Thereupon, it is enough to compute numerically the following bounds
\begin{equation*}
 \sum_{k\leq K}\frac{\left|\Lambda\star\Lambda(k)-\Lambda(k)\log k+2\gamma\right|}{k}+\frac{|R_2^*(K)|}{K}
 \le
   4345\cdot \begin{cases}
    0.0374,\text{ if $K=462\,848$},&\\  
    0.0422,\text{ if $K= 10^6$},&\\  
    0.0579,\text{ if $K= 10^7$},&\\  
    0.0762,\text{ if $K= 10^8$}.&
    \end{cases}
  \end{equation*}
\end{proof}

\begin{lem}
  \label{aux2}
  Let $X,K,K_0$ be real numbers such that $0<K<K_0\leq X$. When $X/K_0\ge 5\cdot 10^6$ and $\log(K_0/K)\leq 19\,000$, we have
  \begin{equation*}
    \sum_{X/K_0<n\le X/K}\frac{\mu^2(n)}{\sqrt{n}}
    \le
    \frac{12}{\pi^2}\sqrt{\frac{X}{K}}.
  \end{equation*}
\end{lem}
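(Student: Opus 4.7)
The plan is to apply Abel summation to convert the weighted sum into boundary terms plus an integral involving $Q(t) = \sum_{n\le t}\mu^2(n)$, and then to use Lemma~\ref{sqf} to control $Q(t)$ on the relevant range. Since $X/K_0 \ge 5\cdot 10^6 > 438\,653$ and $X/K > X/K_0$, the bound $Q(t) = (6/\pi^2)t + O^*(0.02767\sqrt{t})$ applies throughout the interval $[X/K_0,X/K]$.

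Setting $a = X/K_0$ and $b = X/K$, Abel summation with the weight $t^{-1/2}$ gives
\begin{equation*}
\sum_{a < n \le b}\frac{\mu^2(n)}{\sqrt{n}}
= \frac{Q(b)}{\sqrt{b}} - \frac{Q(a)}{\sqrt{a}} + \frac{1}{2}\int_{a}^{b}\frac{Q(t)}{t^{3/2}}dt.
\end{equation*}
Splitting $Q(t) = (6/\pi^2) t + E(t)$ with $|E(t)| \le 0.02767\sqrt{t}$, the main term collapses to
\begin{equation*}
\frac{6}{\pi^2}\bigl(\sqrt{b}-\sqrt{a}\bigr) + \frac{6}{\pi^2}\bigl(\sqrt{b}-\sqrt{a}\bigr) = \frac{12}{\pi^2}\sqrt{\frac{X}{K}} - \frac{12}{\pi^2}\sqrt{\frac{X}{K_0}},
\end{equation*}
using $\int_{a}^{b} t^{-1/2}dt = 2(\sqrt{b}-\sqrt{a})$.

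Thus it suffices to show that the contribution of $E$ does not exceed $(12/\pi^2)\sqrt{X/K_0}$. The two boundary pieces contribute at most $2\cdot 0.02767 = 0.05534$, while the integral piece is bounded by
\begin{equation*}
\frac{0.02767}{2}\int_{a}^{b}\frac{dt}{t} = \frac{0.02767}{2}\log(K_0/K) \le \frac{0.02767}{2}\cdot 19\,000,
\end{equation*}
giving a total error of at most about $263$. Since $X/K_0 \ge 5\cdot 10^6$, one has $(12/\pi^2)\sqrt{X/K_0} \ge (12/\pi^2)\sqrt{5\cdot 10^6} > 2700$, which comfortably absorbs the error and yields the claimed bound.

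The routine obstacle — if any — is simply lining up the two explicit constants: the hypothesis $\log(K_0/K)\le 19\,000$ is tuned so that the logarithmic growth from the integral of $|E(t)|/t^{3/2}$ remains small compared with the square-root lower bound $(12/\pi^2)\sqrt{X/K_0}$ supplied by $X/K_0 \ge 5\cdot 10^6$. No delicate estimate is needed beyond Lemma~\ref{sqf} and this numerical check.
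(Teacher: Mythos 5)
Your proposal is correct and follows essentially the same route as the paper: summation by parts (Abel summation) against $Q(t)=\sum_{n\le t}\mu^2(n)$, the two-sided estimate of Lemma~\ref{sqf} valid since every argument exceeds $5\cdot 10^6>438\,653$, and the observation that the negative term $-\tfrac{12}{\pi^2}\sqrt{X/K_0}\le -2700$ absorbs the error of at most about $263$ coming from $\tfrac{0.02767}{2}\log(K_0/K)+2\cdot 0.02767$. The paper's proof is the same computation written with $Q(t)-Q(X/K_0)$ in the intermediate step, which is algebraically equivalent.
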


\begin{proof}
  With the help of summation by parts, we find that
  \begin{align*}
    \sum_{X/K_0<n\le X/K}\frac{\mu^2(n)}{\sqrt{n}}
    =
      \int_{X/K_0}^{X/K}
    \sum_{X/K_0<n\le t}\mu^2(n)\frac{dt}{2t^{3/2}}
      +\sum_{X/K_0<n\le X/K}\frac{\mu^2(n)}{\sqrt{X/K}}&
    \\=
    \int_{X/K_0}^{X/K}
    \sum_{n\le t}\mu^2(n)\frac{dt}{2t^{3/2}}
    +\sum_{n\le X/K}\frac{\mu^2(n)}{\sqrt{X/K}}
    -\sum_{n\le X/K_0}\frac{\mu^2(n)}{\sqrt{X/K_0}}&
     \\\le
    \int_{X/K_0}^{X/K}
    \bigg(\frac{6}{\pi^2}t+0.02767\sqrt{t}\bigg)\frac{dt}{2t^{3/2}}
    +\frac{6}{\pi^2}\sqrt{\frac{X}{K}}-\frac{6}{\pi^2}\sqrt{\frac{X}{K_0}}+2\cdot 0.02767&
    \\\le\ 
    \frac{12}{\pi^2}\sqrt{\frac{X}{K}}
    -    \frac{12}{\pi^2}\sqrt{\frac{X}{K_0}}
    +\frac{0.02767}{2}\log\bigg(\frac{K_0}{K}\bigg)+2\cdot 0.02767,&
  \end{align*}
  where, since $X/K_0\geq 5\cdot 10^6>438\,653$, we have used Lemma~\ref{sqf}. The result is concluded by noticing that the total contribution of the above second, third and fourth terms is negative.
\end{proof}

\begin{lem}\label{victoire}
  For $X\ge 4\cdot 10^{7}$, we have
  \begin{equation*}
    \biggl|\sum_{n\le X}\mu(n)\Log^2n\biggr|
    \le( 0.006688\,\Log X-0.0504)X.
  \end{equation*}
\end{lem}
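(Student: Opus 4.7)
The plan is to split the argument into two regimes according to whether $X$ lies below or above the threshold $10^{16}$ up to which the unconditional bound \eqref{boundsM} is available. For $4\cdot 10^7 \le X \le 10^{16}$, I would proceed directly by Abel summation, writing
\begin{equation*}
\sum_{n\le X}\mu(n)\Log^2 n = M(X)\Log^2 X - 2\int_1^X M(t)\frac{\Log t}{t}\,dt,
\end{equation*}
and use $|M(t)|\le\sqrt{t}$ from \eqref{boundsM} to bound the right-hand side by $\sqrt{X}\Log^2 X + 4\sqrt{X}\Log X + O(\sqrt{X})$. Since this quantity is $o(X\Log X)$, it suffices to verify the resulting inequality at the left endpoint $X=4\cdot 10^7$ and to confirm monotonicity of the ratio with the target $(0.006688\Log X-0.0504)X$; both checks are elementary.

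For $X\ge 10^{16}$, I would invoke the key identity \eqref{light*} with $K=10^7$, a choice that both satisfies the constraint $K\cdot 2\,160\,535\le 10^{16}$ of Lemma~\ref{aux1}, and permits the use of Lemma~\ref{aux2} with the split threshold $K_0 = 1.8\cdot 10^9$, since $X/K_0\ge 5\cdot 10^6$ and $\Log(K_0/K)=\Log 180 \le 19\,000$. Lemma~\ref{aux1} then bounds the tail piece of \eqref{light*} by $0.0579\,X$. I would split the remaining sum $\sum_{\ell\le X/K}\mu(\ell) R_2^*(X/\ell)$ according to whether $X/\ell$ exceeds $K_0$. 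For $\ell \le X/K_0$, Lemma~\ref{BoundR2} gives $|R_2^*(X/\ell)|\le 0.011\,X/\ell$; together with Lemma~\ref{sqflog} this produces
\begin{equation*}
0.011\,X\Bigl(\tfrac{6}{\pi^2}\Log(X/K_0)+1.045\Bigr)\le 0.006688\,X\Log X - 0.131\,X + O(1),
\end{equation*}
in which the leading coefficient arises precisely as $0.011\cdot 6/\pi^2$. For $X/K_0<\ell\le X/K$, Lemma~\ref{compR2} combined with the crude bound $\Log(X/\ell)\le \Log K_0\approx 21.31$ and Lemma~\ref{aux2} contributes at most $1.93\cdot\Log K_0\cdot (12/\pi^2)\cdot X/\sqrt{K}\approx 0.016\,X$. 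Summing the three contributions yields $0.006688\,X\Log X - (0.131-0.016-0.0579)X + O(1) = 0.006688\,X\Log X - 0.057\,X + O(1)$, which lies comfortably under the target.

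The principal obstacle I expect is the numerical optimization of $K$: Lemma~\ref{aux1} supplies four candidate values, but only $K=10^7$ simultaneously keeps the tail bound (increasing in $K$) and the $O(K^{-1/2})$ contribution from the Lemma~\ref{compR2} range (decreasing in $K$) both small enough that their sum fits within the slack of about $0.08$ left by the $-0.131\,X$ gain of the main term. A secondary bookkeeping task is tracking the implicit additive constants from Lemmas~\ref{sqflog} and~\ref{aux2}, but since these are absolute and divided by $X\ge 10^{16}$, their contribution is negligible at this scale.
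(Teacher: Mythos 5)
Your proof is correct and follows essentially the same route as the paper: the same Abel-summation argument with $|M(t)|\le\sqrt{t}$ for $4\cdot 10^7\le X\le 10^{16}$, and for $X\ge 10^{16}$ the same identity \eqref{light*} estimated through Lemmas \ref{aux1}, \ref{aux2}, \ref{compR2}, \ref{BoundR2} and \ref{sqflog}, differing only in the parameter choices ($K=10^7$, $K_0=1.8\cdot 10^9$ versus the paper's $K=10^8$, $K_0=2\cdot 10^9$). Your numerical bookkeeping checks out (it yields roughly $0.006688\,X\Log X-0.057\,X$, slightly stronger than the stated $-0.0504\,X$), so the argument goes through.
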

In the above range, Lemma \ref{victoire} improves on \cite[Lemma 7.2]{Ramare*12-2} by almost a factor of $2$.

\begin{proof} We consider different cases.

   \smallskip\noindent{\sl Case 1: suppose that $X\ge 10^{16}$}. 
  Let $K_0=2\cdot 10^{9}$ and $K=10^8$. 
  We deduce from \eqref{light*} that
  \begin{align}\label{light2}
\sum_{n\leq X}\mu(n)\log^2n=-2\gamma+\sum_{\ell\leq X/K_0}\mu(\ell)R_2^*\bigg(\frac{X}{\ell}\bigg)+\sum_{X/K_0<\ell\leq X/K}\mu(\ell)R_2^*\bigg(\frac{X}{\ell}\bigg)&\\
+\sum_{k\leq K}(\Lambda\star\Lambda(k)-\Lambda(k)\log k+2\gamma)M\bigg(\frac{X}{k}\bigg)- R_2^*(K)M\bigg(\frac{X}{K}\bigg).&\nonumber
\end{align}
  for any $K\le K_0$.
  
Now, by Lemma \ref{compR2}, we have that
  $|R_2^*(k)|\le 1.93\sqrt{k}\log k$ when $3\le k\le K_0$. Moreover, as $X/K_0\geq 5\cdot 10^6$ and $\log(K_0/K)<19\,000$, we can estimate the third term in \eqref{light2} with the help of Lemma~\ref{aux2} as
  \begin{align*}
   \left|\sum_{X/K_0<\ell\leq X/K}\mu(\ell)R_2^*\bigg(\frac{X}{\ell}\bigg)\right|
       &\le 1.93\sqrt{X}\log(K_0)
    \sum_{X/K_0<\ell\le X/K}\frac{\mu^2(\ell)}{\sqrt{\ell}}
    \\&\le 1.93\sqrt{X}\log(K_0)
    \frac{12}{\pi^2}\sqrt{\frac{X}{K}}
    \le
    \frac{2.35}{\sqrt{K}}X\log K_0.
  \end{align*}
  Furthermore, by combining Lemma~\ref{BoundR2} and Lemma \ref{sqflog}, we have 
  \begin{align*}
\left|\sum_{\ell\leq
    X/K_0}\mu(\ell)R_2^*\bigg(\frac{X}{\ell}\bigg)\right|
    &\leq 0.011\cdot X\sum_{\ell\leq X/K_0}\frac{\mu^2(\ell)}{\ell}\\
&\leq 0.011\cdot X\bigg(\frac{6}{\pi^2}\log\bigg(\frac{X}{K_0}\bigg)+1.045\bigg),
\end{align*}
since $X/K_0\geq 10^6$.

Finally, by Lemma \ref{aux1}, we may bound the last two terms of \eqref{light2}. All in all, we have
  \begin{align*}
  \frac{1}{X}\biggl|\sum_{n\le X}\mu(n)\Log^2n\biggr|
  \le&
  \ \frac{2\gamma}{X}
  +0.011\bigg(\frac{6}{\pi^2}\log\bigg(\frac{X}{K_0}\bigg)+1.045\bigg)
  \\& +\frac{2.35}{\sqrt{K}}\log(K_0)+0.0762
  \\\le& \ 0.006688\cdot\Log X-0.0504
\end{align*}
     \smallskip\noindent{\sl Case 2: suppose that $X\in[4\cdot 10^7,10^{16})$}. By summation by parts,
  we write
  \begin{equation*}
    \sum_{n\le X}\mu(n)\log^2 n
    =
      M(X)\log^2 X-\int_1^XM(t)\frac{2\log t}{t}dt,
    \end{equation*}
    so that estimation \eqref{boundsM} gives 
    \begin{align*}
      \biggl|
      \sum_{n\le X}\mu(n)\log^2 n
      \biggr|
      &\le
        \sqrt{X}\log^2X+\int_1^X\frac{2\log t}{\sqrt{t}}dt
        \\
        &=\sqrt{X}\log^2X+4\sqrt{X}\log X-8\sqrt{X}+8
      \\&
      \le (\log X+4)\sqrt{X}\log X.
    \end{align*}
    We readily check that $0.006688\log
    X-0.0504\geq (\log X)(\log X+4)/\sqrt{X}$ when
    $X\ge 4\cdot 10^7$.
\end{proof}

\begin{proofbold}[Proof of Theorem~\ref{mainbound}] We consider different cases.

 \smallskip\noindent{\sl Case 1: suppose that $X\geq X_1=10^{16}$}.
   We use Lemma~\ref{summ} with $f=\mu$ and $k=2$ and $X_0=4\cdot 10^7$. Thus, $M(\cdot) = M_0(\mu, \cdot)$
  and, by Lemma \ref{victoire}, we derive
  \begin{align}\label{victory}
    |M(X)|
    &\le
    \frac{0.006688\,\Log X-0.0504}{\Log^{2}X}X
    +\biggl|M(X_0)-\frac{M_2(\mu,X_0)}{\Log^{2}X_0}\biggr|\nonumber
    \\&\qquad\qquad+2\int_{X_0}^X\frac{0.006688\Log t-0.0504}{\Log^{3}t}dt.
  \end{align}
  A computer calculation may handle the above second term. Subsequently, by the use Pari/Gp, we obtain
  \begin{equation}\label{pari}
  \biggl|M(X_0)-\frac{M_2(\mu,X_0)}{\Log^{2}X_0}\biggr|\leq 7.01.
  \end{equation}
  Hence, on combining \eqref{victory} and \eqref{pari}, we derive
   \begin{align}\label{victory_2}
    |M(X)|
   \le&
    \frac{0.006688\,\Log X-0.0504}{\Log^{2}X}X
    +7.01
    +\int_{X_0}^X\bigg(\frac{0.013376}{\Log^{2}t}
    -\frac{0.1008}{\Log^{3}t}\bigg)dt
    \nonumber\\
    \le&
    \frac{0.006688\,\Log X-0.0504}{\Log^{2}X}X
    +7.01\nonumber
    \\&
    +0.013376\bigg(\frac{X}{\log^2 X}-\frac{X_0}{\log^2 X_0}\bigg)
    -(0.1008-0.026752)\int_{X_0}^X\frac{dt}{\Log^{3}t}  
    \end{align}
  where we have used that $(\text{Id}/\log^2 )'=1/\log^2 \cdot(1-2/\log )$. Further, the bound 
  \begin{equation*}
  \frac{X}{\log^3 X}-\frac{X_0}{\log^3 X_0}=\int_{X_0}^X\bigg(\frac{t}{\log^3 t}\bigg)'dt\leq\int_{X_0}^X\frac{dt}{\log^3 t}
  \end{equation*}
  leads to a simplification on \eqref{victory} as
     \begin{align}
    |M(X)|
  \le
    \frac{0.006688\,\Log X-0.0504}{\Log^{2}X}X
    +\bigg(7.01-\frac{0.013376X_0}{\log^2 X_0}+\frac{0.074048X_0}{\log^3 X_0}\bigg)&
\nonumber
    \\
    +\frac{0.013376X}{\log^2 X}    -\frac{0.074048X}{\log^3 X}\qquad\qquad\qquad\qquad\qquad&\nonumber \\
    \le\frac{0.006688}{\Log X}X+\frac{X}{\Log^{2}X}\bigg(-0.0504+0.013376-\frac{0.074048}{\log X_1}-\frac{1186.93\log^2 X_1}{X_1}\bigg)&\nonumber\\
    \leq \frac{0.006688\,\Log X-0.039}{\Log^{2}X}X.\qquad\qquad\qquad\qquad\qquad\qquad\qquad\qquad&\label{victoria}
        \end{align}
 \smallskip\noindent{\sl Case 2: suppose that $X\in[X_2,10^{16}]$}, where $X_2=1.5\cdot 10^7$. Then
 \begin{align*}
 \frac{1}{\sqrt{X}}\frac{0.006688\,\Log X-0.039}{\Log^{2}X}X&\geq\bigg(0.006688-\frac{0.039}{\log(X_2)}\bigg)\frac{\sqrt{X}}{\log X}\\
 &\geq\bigg(0.006688-\frac{0.039}{\log(X_2)}\bigg)\frac{\sqrt{X_2}}{\log X_2}\geq 1.
 \end{align*}
  Therefore, by using~\eqref{boundsM}, the bound \eqref{victoria} is valid in the range $[1.5\cdot 10^7,10^{16}]$.
  
   \smallskip\noindent{\sl Case 3: suppose that $X\leq 1.5\cdot 10^7$}.
We conclude by computer verification by relying on Pari/Gp that the bound \eqref{victoria} holds in the range $[T,1.5\cdot 10^7)$, where $T=1\,798\,118$.
\end{proofbold}

\section{Corollaries}

\begin{proofbold}[Proof of Corollary \ref{mainboundbis}] We consider three cases.

     \smallskip\noindent{\sl Case 1: suppose that $X\geq 10^{14}$}. Let $T=1\,798\,118$ and $X_0=10^{14}$. By a numerical calculation, we obtain that 
     \begin{equation}\label{integral}
     \int_1^T|M(t)|dt\leq 216378740
     \end{equation}
 
     Let $A=216378740$. Now, by Theorem \ref{mainbound}, Lemma \ref{EM} and \eqref{integral}, we derive
  \begin{align*}
    |m(X)|
    &\le
    \frac{0.006688\log X-0.039}{\log^2X}
    +\frac{1}{X^2}\int_{T}^X\frac{(0.006688\Log t-0.039)t}{\Log^2t}dt
    \\&\qquad+\frac{1}{X^2}\int_{1}^{T}|M(t)|dt
    +\frac{8}{3X},
    \\&\le
    \frac{0.006688\log X-0.039}{\log^2X}
    +\frac{1}{X^2}\int_{T}^X\frac{0.006688\,tdt}{\Log t}
    \\&\qquad-\frac{1}{X^2}\int_{T}^X\frac{0.039\,tdt}{\Log^2t}
    +\frac{A}{X^2}+\frac{8}{3X}.
  \end{align*}
Moreover, by using that $(\text{Id}^2/\log )'=\text{Id}/\log \cdot(2-1/\log )$ and then the bound 
  \begin{equation*}
  \frac{X^2}{2\log^2 X}-\frac{T^2}{2\log^2 T}=\frac{1}{2}\int_{T}^X\bigg(\frac{t^2}{\log^2 t}\bigg)'dt\leq\int_{T}^X\frac{tdt}{\log^2 t},
  \end{equation*}
   we derive
    \begin{align*}
    |m(X)|
  &\le
    \frac{0.010032\log X-0.039}{\log^2X}
    -\frac{0.003344}{X^2}\frac{T^2}{\log T}
    \\&\qquad-\frac{1}{X^2}\int_{T}^X\frac{0.035656\,tdt}{\Log^2t}
    +\frac{A}{X^2}+\frac{8}{3X}\\
    &\le
     \frac{0.010032\log X-0.039}{\log^2X}
    -\frac{0.003344}{X^2}\frac{T^2}{\log T}
    \\&\qquad-\frac{0.017828}{X^2}\bigg(\frac{X^2}{\log^2 X}-\frac{T^2}{\log^2 T}\bigg)
    +\frac{A}{X^2}+\frac{8}{3X}.
  \end{align*}
  Now, by rearranging terms, we obtain
    \begin{align}\label{bb_neg}
    |m(X)|
  \le&
     \frac{0.010032}{\log X}+\frac{1}{\log^2X}\bigg(-0.039-0.017828+\frac{A\log^2X_0}{X_0^2}+\frac{8\log^2X_0}{3X_0}\bigg)
    \nonumber\\
    &-\frac{1}{X^2}\bigg(0.003344\frac{T^2}{\log T}
    -0.017828\frac{T^2}{\log^2 T}\bigg)\\
   \label{bb}  \le&
     \frac{0.010032\log X-0.0568}{\log^2 X},
  \end{align}
  where we have used that the expression \eqref{bb_neg} in the above estimation is negative.
  
  \smallskip\noindent{\sl Case 2: suppose that $X\in[X_1,10^{14})$, where $X_1=1.5\cdot 10^7$.}
  By~\eqref{boundsm}, we extend the simplified bound
  $|m(X)|\Log^2X\le 0.01\Log X-0.057$ to any $X\ge X_1 $.
  
  \smallskip\noindent{\sl Case 3: suppose that $X< 1.5\cdot 10^7$}. We verify numerically that the bound \eqref{bb} is valid for any $X\geq 617\,990$, whence the result.
\end{proofbold}

\begin{proofbold}[Proof of Corollary~\ref{mainboundbis2}] Recall Corollary \ref{mainboundbis}. We have that $0.01004\log X-0.056\leq 0.0144\log X-0.1$ when $X\geq 617\,990$. Then, we inspect numerically that the result also holds for $X\in[463\,421,617\,990)$. 
\end{proofbold}

\bibliographystyle{plain}
\bibliography{Local}

\Addresses
\end{document}